\documentclass[12pt,a4paper]{article}

\usepackage{geometry}
\geometry{left=2.5cm,right=2.5cm,top=2.5cm,bottom=2.5cm}
\usepackage{titling}
\usepackage{amsopn, amstext, amsmath,amsthm, amsbsy,mathrsfs}
\usepackage{latexsym,amssymb}
\usepackage{ragged2e}
\usepackage{url}
\usepackage{bbm}
\usepackage{authblk}
\usepackage{amsfonts}
\usepackage{dsfont}
\usepackage{color}
\usepackage{booktabs}
\usepackage{multirow}
\usepackage{array}

\usepackage{diagbox, fancyvrb}
\usepackage[english]{babel}


\usepackage{setspace} 
 
\usepackage{titlesec} 
\usepackage{titling}
\titleformat*{\section}{\normalsize\bfseries}
\titleformat*{\subsection}{\normalsize\it\bfseries}
\titlespacing*{\section}{0pt}{6pt}{6pt}

\titlelabel{\thetitle.\quad}

\RequirePackage{bm}

\numberwithin{equation}{section}
\usepackage[square,numbers,sort&compress]{natbib}

\usepackage[colorlinks,
linkcolor=blue,
anchorcolor=blue,
citecolor=blue
]{hyperref}

\newcommand{\sref}[2]{\hyperref[#2]{#1 \ref{#2}}}
\newcommand{\myeqref}[1]{\hyperref[#1]{\eqref{#1} }}

\DeclareMathOperator{\esssup}{esssup}
\newcommand\keywords[1]{\text{keywords}: #1 }
\newcommand{\as}{$\operatorname{\mathbb{P}-a.s.}$}
\newtheorem{theorem}{Theorem}[section]  
\newtheorem{Corollary}{Corollary}[section]    
\newtheorem{lemma}{Lemma}[section]  
\newtheorem{proposition}{Proposition}[section]

\date{\empty}
\pagestyle{empty} 
\renewenvironment{abstract}{\par\noindent\textbf{\abstractname}\ \ignorespaces}{\par\medskip}

\onehalfspacing

\begin{document}

	\title{\large A Class of Multi-dimensional Backward Stochastic Differential Equations with Singular Generators exhibiting Diagonally Quadratic Growth and Applications}
			\author[a,$ \ast $]{Wenbo Wang} 
		\author[a,b]{Guangyan Jia}
		\affil[a]{\normalsize Zhongtai Securities Institute for Financial Studies,  Shandong University, Jinan, P.R. China}
		\affil[b]{\normalsize Shandong Province Key Laboratory of Financial Risk, Jinan, P.R. China}
		
		\vspace{0.8cm}
	\maketitle 		\thispagestyle{empty}
		\begin{spacing}{1.25}
		\begin{flushleft}
		{\small	
			\begin{abstract}
				
				\justifying \noindent
				This paper investigate a class of multi-dimensional backward stochastic differential equations (BSDEs) with  singualr generators exhibiting  diagonally quadratic growth and unbounded terminal conditions, thereby extending results in the literature. We present an example of such equations in optimal investment decision.		
			\end{abstract}

\keywords{Multi-dimensional BSDE, diagonally quadratic generator, singular generator; unbounded terminal conditions} 
}
\end{flushleft}
\end{spacing}

\section{Introduction}

This paper focuses on a multi-dimensional  backward stochastic differential equation (BSDE)
	\begin{equation}\label{eq1}
	Y_{t} = \xi + \int_{t}^{T}H(s,\,Y_{u},\,Z_{u}) \ du - \int_{t}^{T}Z_{u} \ dW_{u}, \quad t \in [0,\, T],
	\end{equation}
where $W = (W^{1},\,\dots,\,W^{d})$ denotes a standard $ d $-dimensional Brownian motion defined on a filtered probability space
$(\Omega,\, \mathcal{F},\, (\mathcal{F}_{s})_{s \in [0, \, T]}, \mathbb{P})$. Here, $(\mathcal{F}_{s})_{s \in [0, \, T]}$ is the $\mathbb{P}$-completion of the filtration generated by $W$. The terminal condition $ \xi $ is an $ \mathcal{F}_{T}$-measurable $ \mathbb{R}^{n} $-valued random variable and the generator $H: [0, T] \times \Omega \times \mathbb{R}^{n} \times \mathbb{R}^{n\times d} \mapsto \mathbb{R}^{n} $ is a progressively measurable process. When $ n = 1 $, nonlinear BSDEs are pioneered under the Lipschitz condition by \citep{Peng1990}.  
BSDEs with generators that exhibit quadratic growth in regards to the variable $ z $ have been extensively researched by articles \citep{BriandHu2006, BriandHu2008, Tevzadze2008, 2013Monotone, BriandRichou2017} and others since \citep{Kobylanski2000} first investigated the situation of  bounded terminal conditions. This paper characterizes a specific category of BSDEs with the form of generators being $f(y)|z|^{2}$, which first appeared in \citep{DuffieEpstein1992} to our knowledge. 
 This paper characterizes a specific category of BSDEs with the form of generators given by $g(y)\lvert z \rvert^{2}$, which to our knowledge first appeared in \citep{DuffieEpstein1992}. 
By employing It\^o-Krylov's formula as well as a ``domination method''derived from the existence result for reflected BSDEs obtained in \citep{Essaky2011}, 
\citep{Bahlali2017} demonstrated existence and uniqueness results when $g$ is globally integrable on $\mathbb{R}$. Another intriguing case is $f(y,\, z)=|z|^{2}/y$, meaning that the generator $ f $ is singular at $ y=0 $. In this instance, \citep{Bahlali2018} assumed that the generator $ f $ satisfies
	\begin{equation}
	0 \leq f(s,\,\omega,\,y,\, z) \leq a_{s}+b_{s} y+\gamma_{s} z+\frac{\delta}{2y}\lvert z \rvert^{2},\, (s,\,\omega,\,y,\, z) \in [0,\, T]\times \Omega \times (0,\, +\infty) \times  \mathbb{R}^{1 \times d},
	\end{equation}
for some processes $ a,\, b,\, \gamma $, and a constant $ \delta $. By utilizing the domination method, they established the existence of solutions in $ \mathcal{S}^{p} \times \mathcal{L}^{2} $. Furthermore, they proved a uniqueness result for bounded solutions using techniques from convex duality.  Subsequently, many researchers has extended this work, such as \citep{Yang2017, Zheng2017, Tian2023, Zheng2024}.
This paper extends the result in \citep{Bahlali2018} to multi-dimensional BSDEs with diagonally quadratic generators and unbounded terminal conditions following the method in \citep{Fan2023}.

\section{Notations and Existence Results}\label{sec2}

This section establishes the existence results. Firstly, we need to introduce the notations used in this paper. We say a process or random variable satisfies some property if this holds except on the empty subset. Thus we sometimes omit \as. $ \mathbb{E}_{t} \left[  \cdot \right] := \mathbb{E}\left[  \cdot \mid \mathcal{F}_{t} \right] $ denotes the conditional mathematical expectation with respect to $ \mathcal{F}_{t} $. The set composed of stopping times $ \tau $ satisfying $ 0\le \tau \le T $ is represented by  $ \mathcal{T}_{0,T}  $. If $ N $ is an adapted and c\`adl\`ag process, define $ N^{\ast} := \sup_{t \in [0,T]} \lvert N_{t} \rvert $. For a matrix $ z=(z_{ij}) $, $z^{i}$ denotes the $i$-th row of $z$, and $ z' $ denotes the transpose of $ z $. We denote its norm by $\lvert z \rvert =\sqrt{\sum_{ij}\lvert z_{ij} \rvert^{2}}.$ The sign function is defined to be $ sgn(y) = \mathds{1}_{ \{y>0\} } - \mathds{1}_{ \{y \le 0\} }$. Let $ \mathbf{0}_{n \times d} $ represent the $ n \times d $ zero matrix and $\mathbf{1}_{n}$ denote the $ n $-dimensional all-ones vector. We recall that class $ (D) $ comprises progressively measurable processes $ X $ satisfying that $\{ X_{\tau}\mid \tau \in \mathcal{T}_{0,\,T} \}$ is uniformly integrable. Sometimes we denote by $ \mathscr{E}(X) $ the stochastic exponential of a one-dimensional local martingale $ X $ and denote by $ Z \cdot W $ the process $ (\int_{0}^{t} Z_{s} dW_{s})_{t \in [0,T]}.$ Given $p\geq 1$ and open sets $ U \subset \mathbb{R},\,V \subset \mathbb{R}^{n} $, let us define the following spaces and notations.

$\mathcal{C}^{p}(U)$: the space of functions from $U $ to $ \mathbb{R}$ having continuous p-th derivative.

$\mathcal{S}:=\mathcal{S}(\mathbb{R}^{n}) $, the space of adapted and continuous processes valued in $ \mathbb{R} $.

$ \mathbb{L}^{p}:= \mathbb{L}^{p}(\Omega, \mathcal{F}_{T}, \mathbb{P}; \mathbb{R}) $, the space of $ \mathcal{F}_{T}$-measurable random variables $\eta$ valued in $ \mathbb{R}$ such that $\mathbb{E}\left[\lvert \eta\rvert ^{p}\right] < \infty.$

$\mathcal{S}^{\infty}:= \mathcal{S}^{\infty}(\mathbb{R}^{n}) $, the space of all $ Y \in \mathcal{S}$ such that $\lVert Y \rVert_{\mathcal{S}^{\infty}} := \esssup\limits_{(\omega,t)}\lvert Y_{t}(\omega) \rvert < \infty.  $

$\mathcal{S}^{p}:= \mathcal{S}^{p}(\mathbb{R}^{n}) $, the space of all processes $Y\in \mathcal{S}$ such that $\lVert Y \rVert_{\mathcal{S}^{p}}:=\left(  \mathbb{E}[\sup\limits_{0 \le t \le T}\lvert Y_{t}\rvert ^{p}]\right)^{\frac{1}{p}} < \infty $. 

$\mathcal{L}^{2}:=\mathcal{L}^{2}(\mathbb{R}^{n\times d})$, the space of $\mathbb{R}^{n\times d}$-valued processes $N$ satisfying $N$ is progressively measurable and a.s. $ \int^{T}_{0}|N_{s}|^{2}ds < \infty. $

$\mathcal{M}^{p}:= \mathcal{M}^{p}(\mathbb{R}^{n\times d})$, the space of progressively measurable processes $G$ fulfilling that $G$ is valued in $\mathbb{R}^{n\times d}$ and $\lVert G \rVert_{\mathcal{M}^{p}}:=\left(  \mathbb{E}\left[\left(\int^{T}_{0}\lvert G_{s}\rvert ^{2}ds\right)^{p/2}\right]\right)^{1/p} < + \infty $. $Z \in \mathcal{M}(\mathbb{R}^{n\times d}) $ means that $Z \in \bigcap\limits_{p\ge1} \mathcal{M}^{p}(\mathbb{R}^{n\times d}).$

$BMO(\mathbb{R}^{n\times d})$: the space of all $ N \in \mathcal{M}^{2}$ satisfying \[ \lVert N \rVert_{BMO}: = \sup\limits_{\tau \in \mathcal{T}_{0,T }}\big\lVert \mathbb{E}\Big[ \int_{\tau}^{T}\lvert  N_{s}\rvert^{2} ds   \big| \mathcal{F}_{\tau} \Big]\big\rVert^{\frac{1}{2}}_{\infty} \! < \infty .\]
We note that for $ N \in BMO(\mathbb{R}^{n\times d}) $, the process $ N\cdot W $ is an $ n $-dimensional $ BMO $ martingale.

We sometimes describe the BSDE with generator $ H $ and terminal $ \xi $ as BSDE$ (\xi, H) $ rather than \myeqref{eq1} for notational simplicity.
A solution of BSDE$ (\xi, H) $ is defined as a pair of processes $(Y,\,Z):=\{(Y_{t},\,Z_{t})\}_{t \in [0,T]}  \in \mathcal{S} \times \mathcal{L}^{2} $ fulfilling that \as, \myeqref{eq1} holds and $ \int_{0}^{T} \lvert  g(u,\,Y_{u},\,Z_{u})\rvert  du < \infty $. Furthermore, we designate $ (Y,\,Z) := \{(Y_{t},\,Z_{t})\}_{t \in [0,T]}$ as a $ L^{p} $ solution provided that $ (Y,\,Z) \in \mathcal{S}^{p}\times  \mathcal{M}^{q}$ for some $ p>1,\,q>1 $. A solution $ (Y,\,Z) $ to BSDE$ (\xi, H) $  is said to be positive if each component of $ Y= (Y^{1},\,\dots,\,Y^{n}) $ satisfies $ Y^{i} > 0,\,i = 1\,\dots,\,n. $

Now we establish the existence theorem for BSDE \myeqref{eq1} under the following assumption:

\noindent {\textbf (H1)} There exist nonnegative and progressive measurable processes $\alpha,\,\beta$, nonnegative constants $A,\,\delta $ with $\delta \neq 1$, and scalar generators
$H^{i}_{1} : [0,\,T] \times \Omega \times \mathbb{R}^{n} \to \mathbb{R}$, and $H^{i}_{2} : [0,\,T] \times \Omega \times \mathbb{R} \times \mathbb{R}^{1\times d} \to \mathbb{R}$, $i =1,\,2,\,\dots,\,n,$ such that 
\begin{itemize}
	\item 
$ H=(H^{1},\,H^{2},\,\dots,\,H^{n}).$ For any $(s,\,\omega,\,y,\,z) \in [0,\,T] \times \Omega \times \mathbb{R}^{n} \times \mathbb{R}^{1\times d}$, and $i =1,\,2,\,\dots,\,n$, $H^{i}(s,\,\omega,\,y,\,z) = H_{1}^{i}(s,\,\omega,\,y) + H_{2}^{i}(s,\,\omega,\,y^{i},\,z^{i})$;
	\item for each $i,\,H^{i}_{1} $ is nonnegative, of linear growth in $y$, and uniformly Lipschitz continuous in $y$,  that is, \as, for all $s \in [0,\,T],\,y,\,y^{1},\,y^{2} \in \mathbb{R}^{n} $,
	\[0 \le H_{1}^{i}(s,\,y)  \le A \left( 1 + \lvert y\rvert \right),\text{ and }\lvert H_{1}^{i}(s,\,y^{1}) - H_{1}^{i}(s,\,y^{2}) \vert \le A \lvert y^{1} - y^{2} \rvert;\]
	\item \as, for each $ i $ and any $ s\in [0,\,T], $ $(y,\,z) \to H_{2}^{i}(s,\,\omega,\,y,\,z) $ is continuous and convex on $ \mathbb{R} \times \mathbb{R}^{1\times d}$;
	\item \as, for each $ i $ and any $(s,\,y,\,z) \in [0,\,T] \times (0,\,+\infty) \times \mathbb{R}^{1\times d}$,	\[0 \le  H_{2}^{i}(s,\,\omega,\,y,\,z) \le \alpha_{s} + \beta_{s} y + \frac{\delta}{2y}\lvert z \rvert^{2}.\]
\end{itemize}

\noindent {\textbf(H2)} $ \xi = (\xi^{1},\,\xi^{2},\,\dots,\,\xi^{n})$ satisfies that $ \xi^{i} > 0,\,i =1,\,2,\,\dots,\,n $. For some constants $ p \ge 2 $ and $ q > 1 $, \[\mathbb{E}\left[ \left(\frac{1}{ \xi^{i} } \right)^{q}\right]<+\infty,\,i =1,\,2,\,\dots,\,n,\,\mathbb{E}\left[ \left(1+\lvert \xi \rvert^{2+\delta}+\int_{0}^{T} \alpha^{2+\delta}_{s} ds\right)^{p}e^{p(2+\delta)\int_{0}^{T}\beta_{s}  d s } \right]<+\infty. \]

\begin{theorem}\label{mainresult}
	Assume (H1) and (H2) hold. Then BSDE \myeqref{eq1} admits a unique solution $ (Y,\,Z) $ such that $ (Y,\,Z) \in \mathcal{S}^{p(2+\delta)}(\mathbb{R}^{n}) \times \mathcal{M}^{p}(\mathbb{R}^{n\times d})$. Moreover, $ (Y,\,Z) $ is a positive solution.	
\end{theorem}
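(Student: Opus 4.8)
The plan is to tame the singularity and the diagonally quadratic growth by an It\^o--Krylov change of the dependent variable, reducing \myeqref{eq1} to a family of BSDEs of linear growth, and then to follow the domination/approximation scheme of \citep{Bahlali2018} together with the $L^{p}$-estimates in the spirit of \citep{Fan2023}. The first step is positivity. Since each $H^{i}=H_{1}^{i}+H_{2}^{i}\ge0$ and $\xi^{i}>0$, and since the target space $\mathcal{M}^{p}$ ($p\ge2$) makes $Z^{i}\cdot W$ a true martingale, conditioning \myeqref{eq1} gives $Y^{i}_{t}=\mathbb{E}_{t}\!\big[\xi^{i}+\int_{t}^{T}H^{i}\,ds\big]\ge\mathbb{E}_{t}[\xi^{i}]>0$. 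The process $M^{i}_{t}:=\mathbb{E}_{t}[\xi^{i}]$ is a positive martingale, so $(M^{i})^{-1}$ is a submartingale and Doob's inequality upgrades the hypothesis $\mathbb{E}[(\xi^{i})^{-q}]<\infty$ to the quantitative bound $\mathbb{E}\big[\sup_{t}(Y^{i}_{t})^{-q}\big]\le C\,\mathbb{E}[(\xi^{i})^{-q}]<\infty$. This lower bound is what ultimately makes the singular term $\tfrac{\delta}{2Y^{i}}\lvert Z^{i}\rvert^{2}$ integrable and will be used again to recover an unweighted estimate on $Z$.

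The heart of the argument is the a priori estimate obtained by applying the It\^o--Krylov formula to $(Y^{i})^{2+\delta}$. The exponent $2+\delta$ is chosen so that the second-order It\^o term \emph{over}-compensates the singular driver: the driver $\Gamma^{i}$ of $(Y^{i})^{2+\delta}$ obeys
\[
\Gamma^{i}\le (2+\delta)(Y^{i})^{1+\delta}\big(A(1+\lvert Y\rvert)+\alpha_{s}\big)+(2+\delta)\beta_{s}(Y^{i})^{2+\delta}-\tfrac{2+\delta}{2}(Y^{i})^{\delta}\lvert Z^{i}\rvert^{2}.
\]
Working with the aggregate $\sum_{i}(Y^{i})^{2+\delta}$, the strictly negative last term supplies coercivity (a genuine $\int_{0}^{T}(Y^{i})^{\delta}\lvert Z^{i}\rvert^{2}\,ds$ on the good side), the term $(2+\delta)\beta_{s}(Y^{i})^{2+\delta}$ is linear and yields, through Gronwall, exactly the weight $e^{(2+\delta)\int_{0}^{T}\beta_{s}ds}$, while Young's inequality absorbs the Lipschitz coupling $A(1+\lvert Y\rvert)$ into $\sum_{i}(Y^{i})^{2+\delta}$ and the inhomogeneity $(Y^{i})^{1+\delta}\alpha_{s}$ at the precise cost of $\alpha_{s}^{2+\delta}$. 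Taking the $p$-th moment and using the Burkholder--Davis--Gundy inequality then reproduces the integrability in (H2) and delivers $Y\in\mathcal{S}^{p(2+\delta)}$; finally the coercive estimate on $\int(Y^{i})^{\delta}\lvert Z^{i}\rvert^{2}$ is turned into the desired $\mathcal{M}^{p}$ bound on $Z$ by H\"older's inequality against $\sup_{t}(Y^{i}_{t})^{-\delta}$, which is controlled by the $L^{q}$ lower bound above.

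For existence I would regularize the driver into a Lipschitz one by replacing $y^{i}$ with $y^{i}\vee\varepsilon$ and truncating the quadratic growth at level $n$, keeping the approximate drivers nonnegative, convex, and dominated by the same bound in (H1). Each approximate BSDE has a unique $L^{2}$-solution; each such solution inherits the uniform lower bound $Y^{i}\ge\mathbb{E}_{t}[\xi^{i}]$ and, by the estimate of the previous paragraph, the uniform $\mathcal{S}^{p(2+\delta)}\times\mathcal{M}^{p}$ bounds. Along a monotone domination scheme as in \citep{Bahlali2018}, these uniform bounds give convergence of $(Y^{(n,\varepsilon)},Z^{(n,\varepsilon)})$ in $\mathcal{S}^{s}\times\mathcal{M}^{s}$ for every $s<p(2+\delta)$; because the limit stays bounded below by $\mathbb{E}_{t}[\xi^{i}]>0$, the regularization $y^{i}\vee\varepsilon$ eventually becomes inactive and, with the $L^{q}$ control of $1/Y^{i}$ furnishing uniform integrability, one may pass to the limit in the singular term $\tfrac{\delta}{2Y^{i}}\lvert Z^{i}\rvert^{2}$ and identify the limit as a positive solution of BSDE$(\xi,H)$; here the exact change of variable $(Y^{i})^{1+\delta}$, under which the singular quadratic contribution cancels identically, is convenient for identifying the limiting equation.

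Uniqueness I would derive from the convexity of each $H_{2}^{i}$, along the convex-duality route of \citep{Bahlali2018}: given two solutions, the subgradient inequality linearizes the diagonal driver, so that $Y-Y'$ solves a linear BSDE whose $z$-coefficient is of $BMO$ type (controlled through the $\mathcal{M}^{p}$ and the $1/Y^{i}$ bounds), after which a Girsanov change of measure removes the linear part and the Lipschitz coupling $H_{1}^{i}$ is closed by Gronwall. The main obstacle is precisely the simultaneous presence of the three pathologies: one must keep the approximations strictly positive with a uniform $L^{q}$ grip on $1/Y^{i}$, ensure that the delicate over-cancellation of the singular quadratic term survives truncation, and---hardest of all---justify the passage to the limit in $\tfrac{\delta}{2Y^{i}}\lvert Z^{i}\rvert^{2}$ so that the limit solves the singular equation itself rather than merely a dominating one, the convexity and the diagonal structure being exactly what compensate for the failure of comparison in the multidimensional setting.
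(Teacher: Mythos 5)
Your proposal correctly isolates the three difficulties (singularity at $y=0$, diagonal quadratic growth, unbounded data) and your a priori estimates --- the lower bound $Y^{i}_{t}\ge\mathbb{E}_{t}[\xi^{i}]$, the $L^{q}$ control of $1/Y^{i}$ via Jensen and Doob, and the It\^o computation on $(Y^{i})^{2+\delta}$ producing the weight $e^{(2+\delta)\int_{0}^{T}\beta_{s}ds}$ and the cost $\alpha_{s}^{2+\delta}$ --- are exactly the estimates the paper uses (they are the content of Lemma \ref{lemma1} and of the bound \eqref{eq32}). However, the two load-bearing steps of your scheme have genuine gaps.

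First, existence. You propose truncating/regularizing the driver and passing to the limit ``along a monotone domination scheme as in \citep{Bahlali2018}.'' That scheme is intrinsically one-dimensional: it rests on comparison between sub- and super-solutions (via reflected BSDEs), and comparison fails for systems in $\mathbb{R}^{n}$. You acknowledge this failure in your last sentence but do not supply the mechanism that replaces it. The paper's mechanism is concrete: because $H^{i}=H_{1}^{i}(s,y)+H_{2}^{i}(s,y^{i},z^{i})$ with $H_{2}^{i}$ depending only on the $i$-th components, freezing $y^{(m)}$ inside $H_{1}^{i}$ decouples the system into $n$ \emph{scalar} singular quadratic BSDEs, each solvable by the one-dimensional theory (Lemma \ref{lemma1}); the resulting Picard iteration is then shown to be Cauchy in $\mathcal{S}^{p(2+\delta)}\times\mathcal{M}^{p}$ by the $\theta$-difference technique ($\Delta_{\theta}y^{(m,q)}=(y^{(m+q)}-\theta y^{(m)})/(1-\theta)$, exploiting convexity of $H_{2}^{i}$) together with a partition of $[0,T]$ into intervals of length $\varepsilon$ on which the Lipschitz coupling through $H_{1}^{i}$ contracts. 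Without this (or an equivalent fixed-point device) your truncated approximations have no reason to converge, and the diagonal structure is never actually used. The passage to the limit in the singular term is also more delicate than ``uniform integrability from the $L^{q}$ control of $1/Y^{i}$'': the paper must split on the event $\{1/(y^{(m+1);i}\wedge y^{i})\le\eta\}$ and combine the $L^{q}$ bound with an a priori moment bound on $\int_{0}^{T}\lvert z^{i}\rvert^{2}/y^{i}\,ds$ obtained from an auxiliary It\^o--Krylov computation (the function $I$ in \eqref{eq28}).

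Second, uniqueness. Linearizing by the subgradient and removing the $z$-part by Girsanov requires the linearizing coefficient, of order $\lvert z\rvert/y$, to generate a uniformly integrable exponential martingale; with $Y$ unbounded, $1/Y^{i}$ only in $L^{q}$, and $Z$ only in $\mathcal{M}^{p}$ (not $BMO$), this is not available --- that route works in \citep{Bahlali2018} precisely because the solutions there are bounded. The paper instead proves uniqueness by the same $\theta$-technique applied to two solutions, which uses convexity pointwise and never requires a measure change. As written, your proof would close only in the bounded setting of Corollary \ref{corollary}, not under (H2).
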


\begin{Corollary}\label{corollary}
Assume (H1) holds with processes $ \alpha $ and $ \beta $ are bounded. Assume also that $ \xi = (\xi^{1},\,\xi^{2},\,\dots,\,\xi^{n})$ is bounded and $ \xi^{i} > 0,\,i =1,\,2,\,\dots,\,n.$ For some constant $ q > 1 $, $\mathbb{E}\left[ \left(\frac{1}{ \xi^{i} } \right)^{q}\right]<\infty.$ Then BSDE \myeqref{eq1} admits a unique solution $ (Y,\,Z) $ such that $ Y\in \mathcal{S}^{\infty}(\mathbb{R}^{n})$ and $Z \in BMO(\mathbb{R}^{n \times d}).$
\end{Corollary}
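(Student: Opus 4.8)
The plan is to deduce the Corollary from Theorem \ref{mainresult} and then to upgrade the integrability of the resulting solution to the stated $\mathcal{S}^{\infty}\times BMO$ class. First I would verify that the present hypotheses are a genuine special case of (H1)--(H2). Assumption (H1) is imposed directly; since $\xi$, $\alpha$ and $\beta$ are bounded, the random variable inside the second expectation of (H2) is bounded, so that moment condition holds for \emph{every} $p\ge 2$, while $\mathbb{E}[(1/\xi^{i})^{q}]<\infty$ is assumed outright. Hence Theorem \ref{mainresult} applies and produces a unique positive solution $(Y,Z)\in\mathcal{S}^{p(2+\delta)}(\mathbb{R}^{n})\times\mathcal{M}^{p}(\mathbb{R}^{n\times d})$ for each $p\ge 2$. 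Uniqueness in the smaller class is then automatic, because any $(Y,Z)\in\mathcal{S}^{\infty}\times BMO$ already lies in $\mathcal{S}^{p(2+\delta)}\times\mathcal{M}^{p}$: a bounded process has all $\mathcal{S}^{p}$-norms finite, and the energy inequality (John--Nirenberg) gives $BMO\hookrightarrow\mathcal{M}^{p}$. So the only real work is to show that the solution delivered by the theorem satisfies $Y\in\mathcal{S}^{\infty}$ and $Z\in BMO$.

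For the $\mathcal{S}^{\infty}$ bound I would use the change of variables $P^{i}:=(Y^{i})^{1+\delta}/(1+\delta)$, which is legitimate since $Y^{i}>0$; because the derivatives of $y\mapsto y^{1+\delta}$ are singular at the origin when $\delta<1$, the It\^o--Krylov formula (rather than the classical one) is the correct tool. The point of this transformation is that it is tuned to the diagonal quadratic term: with $\phi(y)=y^{1+\delta}/(1+\delta)$ one has $\phi'(y)\frac{\delta}{2y}=\frac12\phi''(y)$, so the contribution of $\frac{\delta}{2Y^{i}}|Z^{i}|^{2}$ to the generator of $P^{i}$ is \emph{exactly cancelled} by the second-order It\^o term. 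What remains is a driver bounded above by $(Y^{i})^{\delta}\big(H_{1}^{i}(Y)+\alpha_{s}\big)+(1+\delta)\beta_{s}P^{i}$, which has no quadratic dependence on $Z$. Summing over $i$, using $H_{1}^{i}(Y)\le A(1+|Y|)$ and Young's inequality to absorb each cross term $(Y^{i})^{\delta}Y^{j}$ into $\sum_{k}(Y^{k})^{1+\delta}$, the process $U:=\sum_{i}P^{i}$ satisfies a backward inequality whose driver is dominated by $C(1+U)$ with bounded terminal value $\sum_{i}(\xi^{i})^{1+\delta}/(1+\delta)$. A backward Gronwall (linear comparison) argument then yields $U\in\mathcal{S}^{\infty}$, hence $Y\in\mathcal{S}^{\infty}$, with a bound depending only on $\|\xi\|_{\infty}$, $\|\alpha\|_{\infty}$, $\|\beta\|_{\infty}$, $A$, $\delta$, $n$ and $T$.

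For $Z\in BMO$ the natural route is an energy estimate. Applying It\^o to $(Y^{i})^{2}$ between a stopping time $\tau$ and $T$ and taking $\mathbb{E}_{\tau}$ gives $\mathbb{E}_{\tau}\!\int_{\tau}^{T}|Z^{i}|^{2}\,ds=\mathbb{E}_{\tau}[(\xi^{i})^{2}]-(Y^{i}_{\tau})^{2}+2\,\mathbb{E}_{\tau}\!\int_{\tau}^{T}Y^{i}H^{i}\,ds$. The crucial feature is that the prefactor $Y^{i}$ annihilates the singularity $1/Y^{i}$: by (H1), $Y^{i}H_{2}^{i}\le Y^{i}\alpha_{s}+\beta_{s}(Y^{i})^{2}+\frac{\delta}{2}|Z^{i}|^{2}$, while $Y^{i}H_{1}^{i}\le AY^{i}(1+|Y|)$ is bounded once $Y\in\mathcal{S}^{\infty}$. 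Collecting terms yields $(1-\delta)\,\mathbb{E}_{\tau}\!\int_{\tau}^{T}|Z^{i}|^{2}\,ds\le C$ with $C$ uniform in $\tau$, and dividing by $1-\delta$ (this is precisely where the hypothesis $\delta\neq 1$, in the form $\delta<1$, is used) gives $\|Z^{i}\|_{BMO}^{2}\le C/(1-\delta)$, so $Z\in BMO$.

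The step I expect to be the main obstacle is exactly this last absorption when $\delta>1$: the coefficient $1-\delta$ then has the wrong sign and the energy estimate no longer closes. One can still show that the transformed integrand $\tilde Z^{i}=(Y^{i})^{\delta}Z^{i}$ lies in $BMO$ (the driver of $P^{i}$ carries no quadratic term, so its energy estimate is clean), but recovering $Z^{i}\in BMO$ from $\int(Y^{i})^{2\delta}|Z^{i}|^{2}\in BMO$ would require a strictly positive lower bound $Y^{i}\ge c>0$. Such a bound does not follow from the data as stated, since only $1/\xi^{i}\in\mathbb{L}^{q}$ is assumed and $\xi^{i}$ may approach $0$; securing it (or devising an alternative argument) is the delicate point, and I would expect the clean $BMO$ conclusion to be confined to the regime $\delta<1$ unless extra structure on $\xi$ is brought in.
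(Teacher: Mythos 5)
Your overall architecture --- check that bounded $\xi,\alpha,\beta$ make (H2) hold for every $p\ge 2$, invoke Theorem \ref{mainresult}, then upgrade the regularity of the resulting solution --- is the right one and is essentially what the paper's one-line remark (``apply It\^o's formula to $e^{Mt}\lvert Y_t\rvert^2$'') intends. The reduction step and the $\mathcal{S}^{\infty}$ bound via the transformation $(Y^{i})^{1+\delta}$ are sound (two minor remarks: since $Y^{i}_{t}\ge \mathbb{E}_{t}[\xi^{i}]>0$ and $Y^{i}$ is continuous, its paths never touch $0$, so the classical It\^o formula already applies and It\^o--Krylov is not needed; and the backward Gronwall on $\esssup_{\omega}U_{t}$ should be run by iterating the conditional inequality $U_{t}\le \mathbb{E}_{t}[K+C\int_{t}^{T}U_{s}\,ds]$, which is legitimate because Theorem \ref{mainresult} already gives $\sup_{t}U_{t}\in\mathbb{L}^{1}$). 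The genuine gap is in the $BMO$ step: your energy estimate closes only for $\delta<1$, and your concluding suggestion that the $BMO$ conclusion may be confined to that regime, or require a lower bound $\xi^{i}\ge c>0$, is not correct. The corollary is stated, and is true, for every $\delta\ge 0$ with $\delta\neq 1$.

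The missing ingredient is the sign information in (H1) that your estimate throws away. After applying It\^o's formula to $(Y^{i})^{2}$ you control $2\,\mathbb{E}_{\tau}\!\int_{\tau}^{T}Y^{i}H^{i}\,ds$ by replacing $H^{i}$ with its quadratic majorant, which forces you to absorb $\delta\lvert Z^{i}\rvert^{2}$ and fails for $\delta\ge 1$ (indeed, one can check that no convex test function $\phi$ on $(0,\infty)$ satisfies $\phi''-\tfrac{\delta}{y}\phi'\ge c>0$ near $y=0$ when $\delta>1$, so no choice of weight rescues this route). Instead, use that along the positive solution $H^{i}=H^{i}_{1}+H^{i}_{2}\ge 0$ and $0<Y^{i}\le\lVert Y\rVert_{\mathcal{S}^{\infty}}$, together with the equation itself: taking $\mathbb{E}_{\tau}$ in \eqref{eq1} (the stochastic integral is a true martingale since $Z\in\mathcal{M}^{2}$) gives
\begin{equation*}
\mathbb{E}_{\tau}\!\left[\int_{\tau}^{T}H^{i}(s,Y_{s},Z_{s})\,ds\right]=Y^{i}_{\tau}-\mathbb{E}_{\tau}[\xi^{i}]\le \lVert Y\rVert_{\mathcal{S}^{\infty}},
\end{equation*}
and hence
\begin{equation*}
\mathbb{E}_{\tau}\!\left[\int_{\tau}^{T}\lvert Z^{i}_{s}\rvert^{2}\,ds\right]=\mathbb{E}_{\tau}[(\xi^{i})^{2}]-(Y^{i}_{\tau})^{2}+2\,\mathbb{E}_{\tau}\!\left[\int_{\tau}^{T}Y^{i}_{s}H^{i}\,ds\right]\le \mathbb{E}_{\tau}[(\xi^{i})^{2}]+2\lVert Y\rVert_{\mathcal{S}^{\infty}}\mathbb{E}_{\tau}\!\left[\int_{\tau}^{T}H^{i}\,ds\right]\le 3\lVert Y\rVert_{\mathcal{S}^{\infty}}^{2},
\end{equation*}
uniformly over $\tau\in\mathcal{T}_{0,T}$ and for every $\delta\ge 0$. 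No absorption of $\tfrac{\delta}{2}\lvert Z^{i}\rvert^{2}$ and no uniform positive lower bound on $Y^{i}$ (or on $\xi^{i}$) is needed, and the hypothesis $\mathbb{E}[(1/\xi^{i})^{q}]<\infty$ plays no role in this step. With this replacement your proof covers the full statement.
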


The corollary follows immediately by applying It\^o's formula to $ e^{Mt}\lvert Y_{t}\rvert^{2} $ for some appropriate constant $ M $, so we omit its proof. To prove \sref{Theorem}{mainresult}, we introduce the follwing lemma, and more general cases can be found in  \citep{Yang2017}.
	\begin{lemma} \label{lemma1}
Consider the following one dimensional BSDE
		\begin{equation} \label{eq2}
		Y_{t}=\zeta  + \int_{t}^{T} g(s,\,\omega,\,	Y_{s},\,Z_{s})    d s-\int_{t}^{T} Z_{s} d W_{s},\,1 \le t \le T.
		\end{equation}
		Assume that $ \zeta >0 $, $ g $ is continuous in $ (y, z) $ for each fixed $ (s,\,\omega),$ and there exist nonnegative and progressively measurable processes $ a,\,b$, and a nonnegative constant $ \delta$, such that for any $(s,\,\omega,\,y,\,z) \in [0,\,T] \times \Omega \times (0,\,+\infty) \times \mathbb{R}^{1\times d}$,
	\[0 \le g(s,\,\omega,\,y,\,z) \le a_{s} + b_{s} y  + \frac{\delta}{2y}\lvert z \rvert^{2}.\]

	\begin{enumerate}
	\item If $ (Y,\,Z) $ is a positive solution of \myeqref{eq2} such that \[ \mathbb{E}\left[\left( \sup\limits_{0 \le t \le T}  Y_{t}^{2+\delta}  + \int_{0}^{T}a^{2+\delta}_{r}\ d r\right) e^{(2+\delta)\int_{0}^{T} b_{u} d u}  \right]< +\infty,\] then
	\[ e^{(1+\delta)t+(2+\delta)\int_{0}^{t} b_{u} d u}Y_{t}^{2+\delta} \le  \mathbb{E}\left[ e^{(1+\delta)T+(2+\delta)\int_{0}^{T} b_{u} d u}\zeta^{2+\delta} + \int_{t}^{T} e^{(1+\delta)s+(2+\delta)\int_{0}^{s} b_{u} d u}a^{2+\delta}_{s} \ d s\mid \mathcal{F}_{t}\right]. \]
	\item If $\delta \neq 1,\, (Y,\,Z) $ is a positive solution of \myeqref{eq2} such that \[\mathbb{E}\left[ \sup\limits_{0 \le s \le T}  Y_{s} ^{p(2+\delta)} +  \left(\int_{0}^{T}a_{s}  d s \right)^{p} +  \left(\int_{0}^{T}b_{s}  d s \right)^{p} \right] <+\infty \text{ for some }p > 1, \]  then
	\[
	\mathbb{E}\left[\left(\int_{0}^{T} \lvert Z_{u}\rvert ^{2} d u\right)^{\tfrac{p}{2}}\right] \leq  C+C\mathbb{E}\left[ \sup_{0 \le s \le T}  Y_{s} ^{p(2+\delta)} +  \left(\int_{0}^{T}a_{s}  d s \right)^{p}+ \left(\int_{0}^{T}b_{s}  d s \right)^{p} \right],
	\]	 where $ C $ is a constant depends on $ p,\,\delta,$ and $T. $
	\item If $\delta \neq 1,\,g(s,\,\omega,\,\cdot,\,\cdot) $ is convex in $(y,\,z)$ for each $s,$ \as, and for some $ p > 1 $, 
	\[\mathbb{E}\left[ \left(1 + \zeta^{2+\delta} + \int_{0}^{T} a^{2+\delta}_{r}\ d r\right)^{p}e^{p(2+\delta)\int_{0}^{T} b_{s}d s}\right] < +\infty,\]
	then the BSDE \myeqref{eq2} admits a unique solution $\left(Y,\, Z\right) \in \mathcal{S}^{p(2+\delta)}(\mathbb{R}) \times \mathcal{M}^{p}(\mathbb{R}^{1\times d}).$ Moreover,
	\[\mathbb{E}\left[ \sup\limits_{0 \le t \le T}  Y_{t} ^{p(2+\delta)}e^{p(2+\delta)\int_{0}^{t} b_{s}d s}\right] \le C \mathbb{E}\left[ \left(\zeta^{2+\delta} + \int_{0}^{T} a^{2+\delta}_{r}\ d r\right)^{p}e^{p(2+\delta)\int_{0}^{T} b_{s}d s}\right], \]
	where $ C > 0 $ is a contant depending on $ \delta,\,T,$ and $ p. $
\end{enumerate}
	\end{lemma}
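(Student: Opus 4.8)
The plan is to treat the three parts in order, since each later part reuses the estimates of the earlier ones. Throughout, write $\rho_t = (1+\delta)t + (2+\delta)\int_0^t b_u\,du$ and $M_t = \int_0^t Z_s\,dW_s$, so that $\langle M\rangle_T = \int_0^T|Z_s|^2\,ds$ and $M^\ast = \sup_t|M_t|$. For Part 1, I would apply the It\^o--Krylov formula to $e^{\rho_t}Y_t^{2+\delta}$ (It\^o--Krylov rather than classical It\^o because $g$ is only continuous and singular at $y=0$). Expanding via $dY_t = -g\,dt + Z_t\,dW_t$, the drift of $e^{\rho_t}Y_t^{2+\delta} + \int_0^t e^{\rho_s}a_s^{2+\delta}\,ds$ equals $e^{\rho_t}$ times
\[ \big[(1+\delta)+(2+\delta)b_t\big]Y_t^{2+\delta} - (2+\delta)Y_t^{1+\delta}g + \tfrac12(2+\delta)(1+\delta)Y_t^{\delta}|Z_t|^2 + a_t^{2+\delta}. \]
Inserting $g \le a_t + b_tY_t + \frac{\delta}{2Y_t}|Z_t|^2$ cancels the $b$-terms and leaves the $|Z|^2$-coefficient equal to $\frac{2+\delta}{2}Y_t^\delta \ge 0$, so the drift dominates $(1+\delta)Y^{2+\delta} - (2+\delta)Y^{1+\delta}a_t + a_t^{2+\delta}$. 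By Young's inequality with the conjugate exponents $\frac{2+\delta}{1+\delta}$ and $2+\delta$ this expression is nonnegative, hence the process above is a local submartingale; localizing and using the integrability hypothesis of Part 1 to pass to the limit upgrades it to a true submartingale, and taking conditional expectations at time $t$ gives the claimed inequality.

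For Part 2, I would apply It\^o to $Y_t^2$, obtaining $\int_0^T|Z|^2 = Y_T^2 - Y_0^2 + 2\int_0^T Y_sg\,ds - 2\int_0^T Y_sZ_s\,dW_s$. The crucial point, and the reason for the hypothesis $\delta\neq1$, is that bounding $2Yg \le 2Ya + 2bY^2 + \delta|Z|^2$ creates a $\delta\int|Z|^2$ term which, combined with the It\^o correction $+\int|Z|^2$, leaves the coefficient $(1-\delta)$ in front of $\int|Z|^2$. When $\delta<1$ this is positive, so $\int|Z|^2$ can be isolated; I would then raise to the power $p/2$, take expectations, control the stochastic integral by the Burkholder--Davis--Gundy inequality, and absorb a fraction of $\mathbb{E}[(\int|Z|^2)^{p/2}]$ by Young's inequality, bounding the remaining drift terms by $\sup_tY_t^{p(2+\delta)}$, $(\int a)^p$ and $(\int b)^p$ (using $Y^{2p}\le 1+Y^{p(2+\delta)}$, valid since $\delta\ge0$). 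When $\delta>1$ the direct isolation fails, and here I would instead estimate $\int_0^T Y_sg\,ds \le (\sup_tY_t)\int_0^T g\,ds$ and use the identity $\int_0^T g\,ds = Y_0 - Y_T + M_T$ read off from the equation itself, after which the same BDG-and-Young absorption (now applied to $\sup Y\cdot M^\ast$ and to $\int Y^2|Z|^2$) closes the estimate. In both regimes the absorption must first be performed along the localizing sequence $\tau_n=\inf\{t:\int_0^t|Z|^2\ge n\}$ and then one lets $n\to\infty$ by Fatou; making this localization rigorous is the main technical obstacle of Part 2.

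For Part 3, existence follows by approximation. I would introduce regularized generators $g_n$ (truncated and shifted away from the singularity at $y=0$ while preserving convexity and the one-sided bound), solve the approximating equations by existing theory to get positive solutions $(Y^n,Z^n)$, noting that positivity is automatic since $g\ge0$ forces $Y_t \ge \mathbb{E}_t[\zeta]>0$. The a priori bounds of Parts 1 and 2, uniform in $n$, confine $(Y^n,Z^n)$ to a bounded subset of $\mathcal{S}^{p(2+\delta)}\times\mathcal{M}^p$, and a stability/domination argument in the spirit of \citep{Essaky2011, Bahlali2018}, which is where convexity enters, allows passage to the limit. Uniqueness I would likewise derive from convexity, via the $\theta$-difference technique (comparing $Y$ with $\theta\tilde Y$ and letting $\theta\uparrow1$) or the convex-duality argument of \citep{Bahlali2018}. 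Finally, the displayed estimate is a direct consequence of Part 1: raising its pointwise inequality to the power $p$, using $e^{p\rho_t}\ge e^{p(2+\delta)\int_0^t b_s\,ds}$, and applying Doob's maximal inequality to the martingale on the right-hand side. I expect the principal obstacle of the whole lemma to be this stable limit passage for the combined singular-and-quadratic generator, where one must simultaneously control $\int|Z^n|^2$ and prevent degeneration near $y=0$.
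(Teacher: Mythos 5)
You should first note that the paper contains no proof of this lemma at all: it is stated and immediately invoked, with the reader sent to Yang (2017) for ``more general cases,'' so there is no in-paper argument to measure yours against. Judged on its own terms, your Part 1 is correct and essentially complete: the expansion of $e^{\rho_t}Y_t^{2+\delta}$, the cancellation of the $b$-terms, the surviving nonnegative coefficient $\tfrac{2+\delta}{2}Y^{\delta}$ on $|Z|^2$, the Young inequality $(2+\delta)Y^{1+\delta}a\le(1+\delta)Y^{2+\delta}+a^{2+\delta}$, and the de-localization via the stated integrability hypothesis all check out. (Classical It\^o suffices, since $y\mapsto y^{2+\delta}$ is $C^{2}$ on $(0,\infty)$ and $Y$ is positive; Krylov's extension is not needed.) Part 2 is also sound, and your second device --- bounding $\int_0^T Y_sg\,ds\le(\sup_tY_t)\int_0^Tg\,ds$ and reading $\int_0^Tg\,ds=Y_0-\zeta+M_T$ off the equation --- in fact works for every $\delta\ge0$, which makes your case split (and the hypothesis $\delta\neq1$) superfluous in your version; that is not an error, merely a sign that you are not following the route that motivated the hypothesis. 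The derivation of the final moment estimate in Part 3 from Part 1 together with Doob's inequality is likewise correct.

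The genuine gap is the existence and uniqueness half of Part 3, which is the entire substance of that statement: the well-posedness of a BSDE whose generator is simultaneously quadratic in $z$ and singular at $y=0$, with unbounded data. Your proposal reduces this to ``introduce regularized generators $g_n$\dots, solve the approximating equations by existing theory\dots, and a stability/domination argument\dots allows passage to the limit.'' No approximation is actually specified (how do you truncate $\tfrac{\delta}{2y}|z|^{2}$ while preserving both the one-sided bound and convexity, and which existing result solves the truncated equation under merely $L^{p}$-type integrability of $\zeta$, $a$, $b$?); the uniform lower bound $Y^{n}_t\ge\mathbb{E}_t[\zeta]>0$ is asserted but must be shown to survive the truncation, since it is exactly what prevents degeneration at the singularity; and the passage to the limit in the generator requires a uniform control of $\int_0^T|Z^{n}_s|^{2}/Y^{n}_s\,ds$ near $y=0$ --- the analogue of the step the paper's own proof of its main theorem devotes a separate It\^o argument with the auxiliary function $I(y)$ and a Markov-type estimate on $\sup_r 1/(\mathbb{E}_r[\xi^i])$ to, in the multidimensional setting. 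You correctly identify this as ``the principal obstacle,'' but as written the existence claim of Part 3 rests entirely on the cited literature rather than on an argument you supply, which is also precisely what the paper itself does.
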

\begin{proof}[Proof of \sref{Theorem}{mainresult}.]
	
	For any given $ (V,\,N) \in \mathcal{S}^{p(2+\delta)}(\mathbb{R}^{n}) \times \mathcal{M}^{p}(\mathbb{R}^{n\times d}) $ such that \[\mathbb{E}\left[ \left( \int_{0}^{T} \lvert V_{r} \rvert^{2+\delta}\ d r\right)^{p}e^{p(2+\delta)\int_{0}^{T} \beta_{s}d s}\right] < +\infty, \]  consider the following system of BSDEs:
	\begin{equation} \label{eq3}
y_{t}^{i} = \xi^{i} + \int_{t}^{T} H_{1}^{i}(r,\,V_{r}) + H_{2}^{i}(r,\,y^{i}_{r},\,z^{i}_{r})\ d r - \int_{t}^{T} z_{r}^{i}\ d W_{r},\,0 \le t \le T.
 	\end{equation}
	For each $ i =1,\,2,\,\dots,\,n, $ it is clear from (H1) that for any $ (r,\,\omega,\,y,\,z)\in [0,\,T] \times \Omega \times (0,\,+\infty) \times \mathbb{R}^{1\times d},$
\[ 0\le  g(r,\,\omega,\,y,\,z) := H_{1}^{i}(r,\,V_{r}) + H_{2}^{i}(r,\,\omega,\,y,\,z) \le A\left(1 + \lvert V_{r} \rvert\right) + \alpha_{r} +\beta_{r} y +\frac{\delta}{2y}\lvert z \rvert^{2}, \] and $ g $ is convex in $ (y,\,z) $.
	In view of (H2), we have that 
	\[\mathbb{E}\left[ \left(1+(\xi^{i})^{2+\delta} + \int_{0}^{T} \left( \alpha_{r} + A + A\lvert V_{r} \rvert\right) ^{2+\delta}\ d r\right)^{p}e^{p(2+\delta)\int_{0}^{T} \beta_{s}d s}\right] < +\infty,\]
	thus \sref{Lemma}{lemma1} shows that the system \myeqref{eq3} admits a unique solution $ (y,\,z) \in \mathcal{S}^{p(2+\delta)}(\mathbb{R}^{n}) \times \mathcal{M}^{p}(\mathbb{R}^{n\times d}). $ Moreover, for each $ i =1,\,2,\,\dots,\,n, $
		\begin{align*}
		&e^{(1+\delta)t+(2+\delta)\int_{0}^{t}\beta_{s}ds}(y^{i}_{t})^{2+\delta} \nonumber \\ \le & \mathbb{E}_{t}\left[ e^{(1+\delta)T+(2+\delta)\int_{0}^{T}\beta_{s}ds}\lvert \xi \rvert^{2+\delta} + \int_{t}^{T} e^{(1+\delta)s+(2+\delta)\int_{0}^{s}\beta_{u}du}\left(\alpha_{s}+ A + A\lvert V_{s} \rvert \right)^{2+\delta}d s\right]. 	\end{align*}
	Doob's $ L^{p} $ inequality gives that for any $t \in [0,\,T], $
	\begin{align}\label{eq4}
	&\mathbb{E}\left[\sup_{t \le s \le T}e^{(1+\delta)sp+(2+\delta)p\int_{0}^{s}\beta_{u}du}(y^{i}_{s})^{(2+\delta)p}  \right] \nonumber \\ 
\le &\left(\frac{p}{p-1}\right)^{p}3^{p-1}\mathbb{E}\left[ e^{(1+\delta)pT+(2+\delta)p\int_{0}^{T}\beta_{s}ds}\left( \lvert \xi \rvert^{(2+\delta)p} + 2^{(1+\delta)p}\left( \int_{t}^{T} \left( \alpha_{s}+ A\right)^{2+\delta} \ d s\right)^{p}\right)\right]\nonumber \\
+ & \left(\frac{p}{p-1}\right)^{p}3^{p-1}2^{(1+\delta)p}(T-t)^{p}A^{(2+\delta)p}\mathbb{E}\left[ \sup\limits_{t \le s \le T} \left(  e^{(1+\delta)sp+(2+\delta)p\int_{0}^{s}\beta_{u}du} \lvert V_{s} \rvert ^{(2+\delta)p}\right)\right].  	
	\end{align}

	Based on the above argument, we initialize $ (y^{0},\,z^{0}) = (\mathbf{1}_{n},\,\mathbf{0}_{n\times d}) $ and recursively define the sequence of processes $\{(y^{(m+1)},\,z^{(m+1)})\}_{m=0}^{\infty} \in \mathcal{S}^{p(2+\delta)}(\mathbb{R}^{n}) \times \mathcal{M}^{p}(\mathbb{R}^{n\times d})$ as the
	unique adapted solution to the system of BSDEs:
	\begin{equation} \label{eq5}
	y_{t}^{(m+1);i} = \xi^{i} + \int_{t}^{T} H_{1}^{i}(r,\,y_{r}^{(m)}) + H_{2}^{i}(r,\,y_{r}^{(m+1);i},\,z^{(m+1);i}_{r})\ d r - \int_{t}^{T} z_{r}^{(m+1);i}\ d W_{r},\,0 \le t \le T,
	\end{equation}
	where $i = 1,\,\dots,\, n,\,	y^{(m+1);i}   $ denotes the $ i $-th component of $ y^{(m+1)} $.
	Now we prove $\{(y^{(m+1)},\,z^{(m+1)})\}_{m=0}^{\infty}  $ is a Cauchy sequence in $ \mathcal{S}^{p(2+\delta)} \times \mathcal{M}^{p}$. 
	In view of \myeqref{eq4}, we have that for any $t \in [0,\,T], $
	\begin{align} \label{eq6}
	&\mathbb{E}\left[\sup_{t \le s \le T}e^{(1+\delta)sp+(2+\delta)p\int_{0}^{s}\beta_{u}du}\lvert y^{(m+1)} _{s} \rvert^{(2+\delta)p} \right] \nonumber \\ 
	\le &\left(\frac{p}{p-1}\right)^{p}n^{\frac{(2+\delta)p}{2}}3^{p-1}\mathbb{E}\left[ e^{(1+\delta)pT+(2+\delta)p \int_{0}^{T}\beta_{u}du}\left( \lvert \xi \rvert^{(2+\delta)p} + 2^{(1+\delta)p}\left( \int_{0}^{T} \left( \alpha_{s}+ A\right)^{2+\delta} \ d s\right)^{p}\right)\right]\nonumber \\
	+ & \left(\frac{p}{p-1}\right)^{p}n^{\frac{(2+\delta)p}{2}}3^{p-1}2^{(1+\delta)p}(T-t)^{p}A^{(2+\delta)p}\mathbb{E}\left[ \sup\limits_{t \le s \le T} \left(  e^{(1+\delta)sp+(2+\delta)p\int_{0}^{s}\beta_{u}du} \lvert y^{(m)}_{s} \rvert ^{(2+\delta)p}\right)\right]\nonumber\\
	 \le	&D(p) + B(t,p)\mathbb{E}\left[ \sup\limits_{t \le s \le T} \left(  e^{(1+\delta)sp+(2+\delta)p\beta s} \lvert y^{(m)}_{s} \rvert ^{(2+\delta)p}\right)\right], 	
		\end{align}
where \[ D(p) := \left(\tfrac{p}{p-1}\right)^{p}n^{\tfrac{(2+\delta)p}{2}}3^{p-1}\mathbb{E}\left[ e^{(1+\delta)pT+(2+\delta)p \int_{0}^{T}\beta_{u}du}\left( \lvert \xi \rvert^{(2+\delta)p} + 2^{(1+\delta)p}\left( \int_{0}^{T} \left( \alpha_{s}+ A\right)^{2+\delta} \ d s\right)^{p}\right)\right],\] $B(t,p):=\left(\frac{p}{p-1}\right)^{p}n^{\frac{(2+\delta)p}{2}}3^{p-1}2^{(1+\delta)p}(T-t)^{p}A^{(2+\delta)p} $.
If $ A = 0 , $	it is evident that for any $ t \in [0,\,T], $
\[\sup_{m \ge 0}\mathbb{E}\left[\sup_{t \le s \le T}e^{(1+\delta)sp+(2+\delta)p\int_{0}^{s}\beta_{u}du}\lvert y^{(m+1)} _{s} \rvert^{(2+\delta)p} \right] \le
D(p). \]
If 	$ A > 0 , $ set $ \varepsilon = \frac{1}{\frac{p}{p-1}3^{\frac{p-1}{p}}2^{1+\delta+\frac{1}{p}}	A^{2+\delta}n^{\frac{2+\delta}{2}} } $. Then $ \varepsilon > 0. $ Let $ m_{0} $ be the unique integer such that
\begin{equation}\label{eq8}
	\frac{T}{\varepsilon} \le  m_{0} < \frac{T}{\varepsilon}  +1 .
\end{equation} 
If $ m_{0} = 1, $ that is, $ 2B(t,p)\le 2\left(\frac{p}{p-1}\right)^{p}n^{\frac{(2+\delta)p}{2}}3^{p-1}2^{(1+\delta)p}A^{(2+\delta)p}T^{p} = \left(\frac{T}{\varepsilon} \right) ^{p}\le 1. $ Therefore, it follows from \myeqref{eq6} that for any $m \ge 0,\, t \in [0,\,T], $
\begin{align}\label{eq7}
   &\mathbb{E}\left[\sup_{t \le s \le T}e^{(1+\delta)sp+(2+\delta)p\int_{0}^{s}\beta_{u}du}\lvert y^{(m+1)} _{s} \rvert^{(2+\delta)p} \right]  \nonumber \\ \le
&D(p) + \frac{1}{2}\mathbb{E}\left[ \sup\limits_{t \le s \le T} \left(  e^{(1+\delta)sp+(2+\delta)p\int_{0}^{s}\beta_{u}du} \lvert y^{(m)}_{s} \rvert ^{(2+\delta)p}\right)\right] \nonumber\\
    \le
&	D(p)+\frac{1}{2}D(p)+\frac{1}{4}\mathbb{E}\left[ \sup\limits_{t \le s \le T} \left(  e^{(1+\delta)sp+(2+\delta)p\int_{0}^{s}\beta_{u}du} \lvert y^{(m-1)}_{s} \rvert ^{(2+\delta)p}\right)\right] \nonumber \\
    \le &
D(p)\left(1+\frac{1}{2}+...+\frac{1}{2^{m}}\right)+\frac{1}{2^{m+1}}\mathbb{E}\left[ \sup\limits_{t \le s \le T} \left(  e^{(1+\delta)sp+(2+\delta)p\int_{0}^{s}\beta_{u}du} \lvert y^{(0)}_{s} \rvert ^{(2+\delta)p}\right)\right]\nonumber \\
\le&  2D(p)+\frac{1}{2}\mathbb{E}\left[ \sup\limits_{t \le s \le T} \left(  e^{(1+\delta)sp+(2+\delta)p\int_{0}^{s}\beta_{u}du} \lvert y^{(0)}_{s} \rvert ^{(2+\delta)p}\right)\right].
\end{align} 
Therefore,
\begin{align*}
&\sup_{m \ge 1}\mathbb{E}\left[\sup_{0 \le s \le T}e^{(1+\delta)sp+(2+\delta)p\int_{0}^{s}\beta_{u}du}\lvert y^{(m)} _{s} \rvert^{(2+\delta)p}\right] \nonumber\\ \le &2D(p)+\frac{1}{2}\mathbb{E}\left[ \sup\limits_{0 \le s \le T} \left(  e^{(1+\delta)sp+(2+\delta)p\int_{0}^{s}\beta_{u}du} \lvert y^{(0)}_{s} \rvert ^{(2+\delta)p}\right)\right]< +\infty.\end{align*}

If $ m_{0} = 2$, then $ \varepsilon < T \le 2\varepsilon. $
Thus for all $ t \in \left[T-\varepsilon,\,T\right],\, 2B(t,p)  = \left(\frac{T-t}{\varepsilon} \right) ^{p}  \le 1. $ Similarly, we have that 
\begin{align}\label{eq9}
&\sup_{m \ge 1}\mathbb{E}\left[\sup_{T-\varepsilon \le s \le T}e^{(1+\delta)sp+(2+\delta)p\int_{0}^{s}\beta_{u}du}\lvert y^{(m)} _{s} \rvert^{(2+\delta)p} \right] \nonumber\\ \le &2D(p)+\frac{1}{2}\mathbb{E}\left[ \sup\limits_{0 \le s \le T} \left(  e^{(1+\delta)sp+(2+\delta)p\int_{0}^{s}\beta_{u}du} \lvert y^{(0)}_{s} \rvert ^{(2+\delta)p}\right)\right] < +\infty.\end{align} 
In particular,
\begin{align}\label{eq10}
&\sup_{m \ge 1} \mathbb{E}\left[e^{(1+\delta)(T-\varepsilon)p+(2+\delta)p\int_{0}^{T-\varepsilon}\beta_{u}du}\lvert y^{(m)} _{T-\varepsilon} \rvert^{(2+\delta)p} \right] \nonumber\\ \le &2D(p)+\frac{1}{2}\mathbb{E}\left[ \sup\limits_{0\le s \le T} \left(  e^{(1+\delta)sp+(2+\delta)p\int_{0}^{s}\beta_{u}du} \lvert y^{(0)}_{s} \rvert ^{(2+\delta)p}\right)\right] < +\infty. \end{align} 
Consider the following system of BSDEs for any $m \ge 0,\,0 \le t \le T-\varepsilon,$
\begin{equation} \label{eq11}
y_{t}^{(m+1);i} = y^{(m+1);i}_{T-\varepsilon} + \int_{t}^{T-\varepsilon} H_{1}^{i}(r,\,y_{r}^{(m)}) + H_{2}^{i}(r,\,y_{r}^{(m+1);i},\,z^{(m+1);i}_{r})\ d r - \int_{t}^{T-\varepsilon} z_{r}^{(m+1);i}\ d W_{r},
\end{equation}
where $i = 1,\,\dots,\, n$. Noting the fact that $ T-\varepsilon < \varepsilon $, we obtain an inequality similar to \myeqref{eq7} for any $ m \ge 0,\,t \in [0,\,T-\varepsilon], $
	\begin{align} \label{eq12}
&\mathbb{E}\left[\sup_{t \le s \le T-\varepsilon}e^{(1+\delta)sp+(2+\delta)p\int_{0}^{s}\beta_{u}du}\lvert y^{(m+1)} _{s} \rvert^{(2+\delta)p}\right]  \nonumber\\
\le	 &\left(\frac{p}{p-1}\right)^{p}n^{\frac{(2+\delta)p}{2}}3^{p-1}\mathbb{E}\left[ e^{(1+\delta)p(T-\varepsilon)+(2+\delta)p \int_{0}^{T-\varepsilon}\beta_{u}du}\lvert y^{(m+1)}_{T-\varepsilon} \rvert^{(2+\delta)p} \right]
\nonumber \\
+ & \left(\frac{p}{p-1}\right)^{p}n^{\frac{(2+\delta)p}{2}}3^{p-1}\mathbb{E}\left[ e^{(1+\delta)p(T-\varepsilon)+(2+\delta)p \int_{0}^{T-\varepsilon}\beta_{u}du}2^{(1+\delta)p}\left( \int_{0}^{T-\varepsilon} \left( \alpha_{s}+ A\right)^{2+\delta} \ d s\right)^{p}\right]
\nonumber \\
+ & \left(\frac{p}{p-1}\right)^{p}n^{\frac{(2+\delta)p}{2}}3^{p-1}2^{(1+\delta)p}(T-\varepsilon-t)^{p}A^{(2+\delta)p}\mathbb{E}\left[ \sup\limits_{t \le s \le T-\varepsilon} \left(  e^{(1+\delta)sp+(2+\delta)p\int_{0}^{s}\beta_{u}du} \lvert y^{(m)}_{s} \rvert ^{(2+\delta)p}\right)\right]\nonumber\\
\le	&
\left(\frac{p}{p-1}\right)^{p}n^{\frac{(2+\delta)p}{2}}3^{p-1}\left(2D(p)+\frac{1}{2}\mathbb{E}\left[ \sup\limits_{0\le s \le T} \left(  e^{(1+\delta)sp+(2+\delta)p\int_{0}^{s}\beta_{u}du} \lvert y^{(0)}_{s} \rvert ^{(2+\delta)p}\right)\right]\right)
\nonumber\\
+	&D(p)+\frac{1}{2}\mathbb{E}\left[ \sup\limits_{t \le s \le T-\varepsilon} \left(  e^{(1+\delta)sp+(2+\delta)p\int_{0}^{s}\beta_{u}du} \lvert y^{(m)}_{s} \rvert ^{(2+\delta)p}\right)\right]\nonumber\\
=	&
\bar{D}(p)+\frac{1}{2}\mathbb{E}\left[ \sup\limits_{t \le s \le T-\varepsilon} \left(  e^{(1+\delta)sp+(2+\delta)p\int_{0}^{s}\beta_{u}du} \lvert y^{(m)}_{s} \rvert ^{(2+\delta)p}\right)\right]\nonumber\\
\le &
\bar{D}(p)\left(1+\frac{1}{2}+...+\frac{1}{2^{m}}\right)+\frac{1}{2^{m+1}}\mathbb{E}\left[ \sup\limits_{t \le s \le T-\varepsilon} \left(  e^{(1+\delta)sp+(2+\delta)p\int_{0}^{s}\beta_{u}du} \lvert y^{(0)}_{s} \rvert ^{(2+\delta)p}\right)\right]\nonumber \\
\le&  2\bar{D}(p)+\frac{1}{2}\mathbb{E}\left[ \sup\limits_{0 \le s \le T} \left(  e^{(1+\delta)sp+(2+\delta)p\int_{0}^{s}\beta_{u}du} \lvert y^{(0)}_{s} \rvert ^{(2+\delta)p}\right)\right],
\end{align}
where \begin{align*}
	\bar{D}(p) := &\left( 1+2\left(\frac{p}{p-1}\right)^{p}n^{\frac{(2+\delta)p}{2}}3^{p-1}\right)D(p)\nonumber \\+&\left(\frac{p}{p-1}\right)^{p}n^{\frac{(2+\delta)p}{2}}3^{p-1}\frac{1}{2}\mathbb{E}\left[ \sup\limits_{0\le s \le T} \left(  e^{(1+\delta)sp+(2+\delta)p\int_{0}^{s}\beta_{u}du} \lvert y^{(0)}_{s} \rvert ^{(2+\delta)p}\right)\right].
\end{align*}  
Therefore,
	\begin{align} \label{eq13}
&\sup_{m \ge 1}\mathbb{E}\left[\sup_{0 \le s \le T-\varepsilon}e^{(1+\delta)sp+(2+\delta)p\int_{0}^{s}\beta_{u}du}\lvert y^{(m)} _{s} \rvert^{(2+\delta)p}\right]  \nonumber\\
\le	 &\left( 1+2\left(\frac{p}{p-1}\right)^{p}n^{\frac{(2+\delta)p}{2}}3^{p-1}\right)2D(p)\nonumber \\+&\left( 1+2\left(\frac{p}{p-1}\right)^{p}n^{\frac{(2+\delta)p}{2}}3^{p-1}\right)\frac{1}{2}\mathbb{E}\left[ \sup\limits_{0\le s \le T} \left(  e^{(1+\delta)sp+(2+\delta)p\int_{0}^{s}\beta_{u}du} \lvert y^{(0)}_{s} \rvert ^{(2+\delta)p}\right)\right].
\end{align}
Combining \myeqref{eq9} and \myeqref{eq13} gives that
\begin{align}\label{eq14}
&\sup_{m \ge 1}\mathbb{E}\left[\sup_{0 \le s \le T}e^{(1+\delta)sp+(2+\delta)p\int_{0}^{s}\beta_{u}du}\lvert y^{(m)} _{s} \rvert^{(2+\delta)p} \right] \nonumber\\
\le &\left( 1+2\left(\frac{p}{p-1}\right)^{p}n^{\frac{(2+\delta)p}{2}}3^{p-1}\right)2D(p)\nonumber \\+&\left( 1+2\left(\frac{p}{p-1}\right)^{p}n^{\frac{(2+\delta)p}{2}}3^{p-1}\right)\frac{1}{2}\mathbb{E}\left[ \sup\limits_{0\le s \le T} \left(  e^{(1+\delta)sp+(2+\delta)p\int_{0}^{s}\beta_{u}du} \lvert y^{(0)}_{s} \rvert ^{(2+\delta)p}\right)\right]<\infty.
\end{align} 
If $ m_{0} = 3, $ then a similar induction gives
\begin{align}\label{eq15}
&\sup_{m \ge 1}\mathbb{E}\left[\sup_{0 \le s \le T}e^{(1+\delta)sp+(2+\delta)p\int_{0}^{s}\beta_{u}du}\lvert y^{(m)} _{s} \rvert^{(2+\delta)p} \right] \nonumber\\
\le &\left( 1+2\left(\frac{p}{p-1}\right)^{p}n^{\frac{(2+\delta)p}{2}}3^{p-1}+\left(2\left(\frac{p}{p-1}\right)^{p}n^{\frac{(2+\delta)p}{2}}3^{p-1}\right)^{2}\right)\nonumber \\\cdot&\left(2D(p)+ \frac{1}{2}\mathbb{E}\left[ \sup\limits_{0\le s \le T} \left(  e^{(1+\delta)sp+(2+\delta)p\int_{0}^{s}\beta_{u}du} \lvert y^{(0)}_{s} \rvert ^{(2+\delta)p}\right)\right]\right)<\infty.
\end{align} 

Therefore, 
\begin{align}\label{eq16}
&\sup_{m \ge 1}\mathbb{E}\left[\sup_{0 \le s \le T}e^{(1+\delta)sp+(2+\delta)p\int_{0}^{s}\beta_{u}du}\lvert y^{(m)} _{s} \rvert^{(2+\delta)p} \right] \nonumber\\
\le &\left( 1+2\left(\frac{p}{p-1}\right)^{p}n^{\frac{(2+\delta)p}{2}}3^{p-1}+\dots+\left(2\left(\frac{p}{p-1}\right)^{p}n^{\frac{(2+\delta)p}{2}}3^{p-1}\right)^{m_{0}-1}\right)\nonumber \\\cdot&\left(2D(p)+ \frac{1}{2}\mathbb{E}\left[ \sup\limits_{0\le s \le T} \left(  e^{(1+\delta)sp+(2+\delta)p\int_{0}^{s}\beta_{u}du} \lvert y^{(0)}_{s} \rvert ^{(2+\delta)p}\right)\right]\right)<\infty.
\end{align} 

Now we prove that
\begin{align}\label{eq17}
	\sup_{m \ge 1}\mathbb{E}\left[\left( \int_{0}^{T} \lvert z^{(m)}_{r}\rvert^{2}\ d r\right)^{\frac{p}{2}} \right]<\infty.
\end{align}
	 	From \sref{Lemma}{lemma1}, we have for any $ m \ge 0, $
	 \[
	 \mathbb{E}\left[\left(\int_{0}^{T} \lvert z^{(m+1);i}_{u}\rvert ^{2} d u\right)^{\tfrac{p}{2}}\right] \leq  C\left\{1+ \sup_{m \ge 0}\mathbb{E}\left[ \sup_{0 \le s \le T}  \lvert y^{(m)}_{s} \rvert^{p(2+\delta)} \right]+ \mathbb{E}\left[ \left(\int_{0}^{T}\alpha_{s} d s \right)^{p}+\left(\int_{0}^{T}\beta_{s} d s \right)^{p} \right]\right\},
	 \]
	 where $ C $ is a contant independent on $ m. $ Consequently, in view of (H2), we have
	 \[\sup_{m \ge 0}\mathbb{E}\left[\left(\int_{0}^{T} \lvert z^{(m)}_{u}\rvert ^{2} d u\right)^{\tfrac{p}{2}}\right] <\infty.  \]
	Now we prove that $\{(y^{(m)},\,z^{(m)})\}_{m=1}^{\infty}  $ is a Cauchy sequence in $ \mathcal{S}^{p(2+\delta)} \times \mathcal{M}^{p} $.
	For any given $m,\,q \ge 1,\,\theta \in (0,\, 1) $, set
\[
\Delta_{\theta}y^{(m,q)}_{t}:=\frac{y^{(m+q)}_{t}-\theta y_{t}^{(m)}}{1-\theta}, \  \Delta_{\theta} z^{(m,q)}_{t} := \frac{z^{(m+q)}_{t}-\theta z^{(m)}_{t}}{1-\theta}.
\] 
	Therfore, $(\Delta_{\theta}y^{(m,q)},\,\Delta_{\theta} z^{(m,q)})  $ is a solution to the following system of BSDEs
	\begin{align*}
		\Delta_{\theta}y^{(m,q);i}_{t} = \xi^{i} &+ \int_{t}^{T} \frac{1}{1-\theta}\Big[H_{1}^{i}\left(u,\, y^{(m+q-1)}_{u}\right)-\theta H_{1}^{i}\left(u,\, y^{(m-1)}_{u}\right)\nonumber\\&+H_{2}^{i}\left(u,\, y_{u}^{(m+q);i},\, z_{u}^{(m+q);i}\right)-\theta H_{2}^{i}\left(u,\, y_{u}^{(m);i},\, z_{u}^{(m);i}\right)\Big]du - \int_{t}^{T} \Delta_{\theta} z^{(m,q);i}_{u}d W_{u}.
	\end{align*}	
It follows from (H1) that
\begin{align}\label{eq18}
	&\frac{\mathds{1}_{\Delta_{\theta}y^{(m,q);i}_{u}>0}}{1-\theta}\left[H_{1}^{i}\left(u,\, y^{(m+q-1)}_{u}\right)-\theta H_{1}^{i}\left(u,\, y^{(m-1)}_{u}\right)+H_{2}^{i}\left(u,\, y_{u}^{(m+q);i},\, z_{u}^{(m+q);i}\right)-\theta H_{2}^{i}\left(u,\, y_{u}^{(m);i},\, z_{u}^{(m);i}\right)\right] \nonumber \\
	\le &\frac{A}{1-\theta}\lvert y^{(m+q-1)}_{u}- y^{(m-1)}_{u}\rvert+H^{i}_{1}\left(u,\, y^{(m-1)}_{u}\right)+ \mathds{1}_{\Delta_{\theta}y^{(m,q);i}_{u}>0}H_{2}^{i}\left(u,\, 	\Delta_{\theta}y^{(m,q);i}_{u},\, 	\Delta_{\theta}z^{(m,q);i}_{u}\right)\nonumber \\
	\le &A\left[\lvert \Delta_{\theta}y^{(m-1,q)}_{u}\rvert +1+2\lvert  y^{(m-1)}_{u}\rvert\right]+\alpha_{u}+\beta_{u}(\Delta_{\theta}y^{(m,q);i}_{u})^{+}+\frac{\delta\mathds{1}_{\Delta_{\theta}y^{(m,q);i}_{u}>0}}{2\Delta_{\theta}y^{(m,q);i}_{u}}\lvert\Delta_{\theta}z^{(m,q);i}_{u}\rvert^{2}.
\end{align}	
	Applying Tanaka’s formula and It\^o's formula to $ ((\Delta_{\theta}y^{(m,q);i}_{t})^{+})^{2+\delta}e^{(1+\delta)t+(2+\delta)\int_{0}^{t}\beta_{u}du} $ and using \myeqref{eq16}, we have for $ t \in [0,\,T], $
\begin{align*}
	&((\Delta_{\theta}y^{(m,q);i}_{t})^{+})^{2+\delta}e^{(1+\delta)t+(2+\delta)\int_{0}^{t}\beta_{u}du} \nonumber \\ \le & \mathbb{E}_{t}\!\left[ \left(\xi^{i}\right)^{2+\delta}e^{(1+\delta)T+(2+\delta)\int_{0}^{T}\beta_{u}du}+\!\! \int_{t}^{T}\!e^{(1+\delta)s+(2+\delta)\int_{0}^{s}\beta_{u}du}\left(A+\alpha_{s}+2A\lvert  y^{(m-1)}_{s}\rvert+A\lvert \Delta_{\theta}y^{(m-1,q)}_{s}\rvert\right)^{2+\delta}d s\right]. 
\end{align*} 
	Analogously, define
	\[
	\Delta_{\theta}\tilde{y}^{(m,q)}_{t}:=\frac{y^{(m)}_{t}-\theta y_{t}^{(m+q)
	}}{1-\theta}, \  \Delta_{\theta} \tilde{z}^{(m,q)}_{t} := \frac{z^{(m)}_{t}-\theta z^{(m+q)}_{t}}{1-\theta}.
	\] 
	Then we have
\begin{align*}
&((\Delta_{\theta}\tilde{y}^{(m,q);i}_{t})^{+})^{2+\delta}e^{(1+\delta)t+(2+\delta)\int_{0}^{t}\beta_{u}du} \nonumber \\ \le & \mathbb{E}_{t}\!\left[ \left(\xi^{i}\right)^{2+\delta}e^{(1+\delta)T+(2+\delta)\int_{0}^{T}\beta_{u}du}+\!\! \int_{t}^{T}\!e^{(1+\delta)s+(2+\delta)\int_{0}^{s}\beta_{u}du}\left(A+\alpha_{s}+2A\lvert  y^{(m+q-1)}_{s}\rvert+A\lvert \Delta_{\theta}\tilde{y}^{(m-1,q)}_{s}\rvert\right)^{2+\delta}d s\right]. 
\end{align*} 
	Note the facts that
	\[\left(\Delta_{\theta}y^{(m,q);i}_{t} \right)^{-}  = \frac{\left(\theta  y_{t}^{(m);i} - y^{(m+q);i}_{t}\right)^{+}}{1-\theta} \le \left(\Delta_{\theta}\tilde{y}^{(m,q);i}_{t} \right)^{+},\]
	and \[\left(\Delta_{\theta}\tilde{y}^{(m,q);i}_{t} \right)^{-}  \le \left(\Delta_{\theta}y^{(m,q);i}_{t} \right)^{+}. \]
	Thus,
	\[\lvert\Delta_{\theta}y^{(m,q)}_{t} \rvert^{2+\delta},\,\lvert\Delta_{\theta}\tilde{y}^{(m,q)}_{t} \rvert^{2+\delta} \nonumber \le n^{\frac{2+\delta}{2}}2^{1+\delta}\left(\max_{i}((\Delta_{\theta}y^{(m,q);i}_{t} )^{+})^{2+\delta}+\max_{i}((\Delta_{\theta}\tilde{y}^{(m,q);i}_{t} )^{+})^{2+\delta}\right).\]
	Consequently, we have
	\begin{align}\label{eq19}
		&\left( \lvert\Delta_{\theta}y^{(m,q)}_{t} \rvert ^{2+\delta}+ \lvert\Delta_{\theta}\tilde{y}^{(m,q)}_{t} \rvert ^{2+\delta} \right)e^{(1+\delta)t+(2+\delta)\int_{0}^{t}\beta_{u}du} \nonumber \\ \le &
			n^{\frac{2+\delta}{2}}2^{3+\delta}\mathbb{E}_{t}\left[ \lvert\xi\rvert^{2+\delta}e^{(1+\delta)T+(2+\delta)\int_{0}^{T}\beta_{u}du}\right]\nonumber \\ + &
			n^{\frac{2+\delta}{2}}2^{5+3\delta}\mathbb{E}_{t}\left[\int_{t}^{T}e^{(1+\delta)s+(2+\delta)\int_{0}^{s}\beta_{u}du}\left(A+\alpha_{s}\right)^{2+\delta}d s\right]\nonumber \\ + &
			n^{\frac{2+\delta}{2}}2^{6+4\delta}A^{2+\delta}\mathbb{E}_{t}\left[\int_{t}^{T}e^{(1+\delta)s+(2+\delta)\int_{0}^{s}\beta_{u}du}\left(\lvert  y^{(m-1)}_{s}\rvert^{2+\delta}+\lvert  y^{(m+q-1)}_{s}\rvert^{2+\delta}\right)d s\right]\nonumber \\ + &
			n^{\frac{2+\delta}{2}}2^{3+2\delta}A^{2+\delta}\mathbb{E}_{t}\left[\int_{t}^{T}e^{(1+\delta)s+(2+\delta)\int_{0}^{s}\beta_{u}du}\left(\lvert \Delta_{\theta}\tilde{y}^{(m-1,q)}_{s}\rvert^{2+\delta}+\lvert \Delta_{\theta}y^{(m-1,q)}_{s}\rvert^{2+\delta}\right)d s\right].
	\end{align}
 
	Therefore, Doob's $ L^{p} $ inequality gives for any $ t \in [0,\,T], $
	\begin{align}\label{eq20}
	&\mathbb{E}\left[\sup_{t \le s \le T}  \left( \lvert\Delta_{\theta}y^{(m,q)}_{s} \rvert ^{2+\delta}+ \lvert\Delta_{\theta}\tilde{y}^{(m,q)}_{s} \rvert ^{2+\delta} \right)^{p}e^{(1+\delta)sp+(2+\delta)p\int_{0}^{s}\beta_{u}du}  \right]\nonumber \\ \le & (\frac{p}{p-1})^{p}4^{p-1}	n^{\frac{(2+\delta)p}{2}}2^{(5+3\delta)p}\mathbb{E}\left[\left( \lvert\xi\rvert^{(2+\delta)p}+\left( \int_{0}^{T}\left(A+\alpha_{s}\right)^{2+\delta}d s\right)^{p}\right)e^{(1+\delta)pT+(2+\delta)p\int_{0}^{T}\beta_{u}du}\right]\nonumber \\ + &
	(\frac{p}{p-1})^{p}4^{p-1}n^{\frac{(2+\delta)p}{2}}2^{(7+4\delta)p}A^{(2+\delta)p}T^{p}\sup_{m \ge 1}\mathbb{E}\left[\sup_{0 \le s \le T}e^{(1+\delta)sp+(2+\delta)p\int_{0}^{s}\beta_{u}du}\lvert y^{(m)} _{s} \rvert^{(2+\delta)p} \right]\nonumber \\ + &
	(\frac{p}{p-1})^{p}4^{p-1}n^{\frac{(2+\delta)p}{2}}2^{(3+2\delta)p}A^{(2+\delta)p}(T-t)^{p}\nonumber \\ \cdot &\mathbb{E}\left[\sup_{t \le s \le T}e^{(1+\delta)ps+(2+\delta)p\int_{0}^{s}\beta_{u}du}\left(\lvert \Delta_{\theta}\tilde{y}^{(m-1,q)}_{s}\rvert^{2+\delta}+\lvert \Delta_{\theta}y^{(m-1,q)}_{s}\rvert^{2+\delta}\right)^{p}\right]\nonumber \\ = &
	\tilde D(p) + \tilde B(t,\,p)\mathbb{E}\left[\sup_{t \le s \le T}e^{(1+\delta)ps+(2+\delta)p\int_{0}^{s}\beta_{u}du}\left(\lvert \Delta_{\theta}\tilde{y}^{(m-1,q)}_{s}\rvert^{2+\delta}+\lvert \Delta_{\theta}y^{(m-1,q)}_{s}\rvert^{2+\delta}\right)^{p}\right],
	\end{align}
	where $ \tilde D(p) $ is a constant independent on $m$ and $q,\,  \tilde B(t,\,p) =	(\frac{p}{p-1})^{p}4^{p-1}n^{\frac{(2+\delta)p}{2}}2^{(3+2\delta)p}A^{(2+\delta)p}(T-t)^{p}.$
If $ A = 0 , $	it is evident that for any $ t \in [0,\,T], $
\[\mathbb{E}\left[\sup_{t \le s \le T}\left( \lvert\Delta_{\theta}y^{(m,q)}_{s} \rvert ^{2+\delta}+ \lvert\Delta_{\theta}\tilde{y}^{(m,q)}_{s} \rvert ^{2+\delta} \right)^{p}e^{(1+\delta)sp+(2+\delta)p\int_{0}^{s}\beta_{u}du}  \right] \le
\tilde D(p). \]
If 	$ A > 0 , $ set $ \tilde \varepsilon = \frac{1}{\frac{p}{p-1}4^{\frac{p-1}{p}}2^{3+2\delta+\frac{1}{p}}	A^{2+\delta}n^{\frac{2+\delta}{2}} } $. Then $\tilde \varepsilon > 0. $ Let $ \tilde m_{0} $ be the unique integer such that
\begin{equation}\label{eq21}
\frac{T}{\tilde \varepsilon} \le \tilde m_{0} < \frac{T}{\tilde \varepsilon}  +1 .
\end{equation} 
 Following a similar process as used to derive \myeqref{eq16}, we can obtain that for any $m,\,q \ge 1 $, 
		\begin{align}\label{eq22}
		&\mathbb{E}\left[\sup_{0 \le s \le T}\left( \lvert\Delta_{\theta}y^{(m,q)}_{s} \rvert ^{2+\delta}+ \lvert\Delta_{\theta}\tilde{y}^{(m,q)}_{s} \rvert ^{2+\delta} \right)^{p}e^{(1+\delta)sp+(2+\delta)p\int_{0}^{s}\beta_{u}du} \right] \nonumber\\
		\le &\left( 1+2^{(2+\delta)p+1}\left(\frac{p}{p-1}\right)^{p}n^{\frac{(2+\delta)p}{2}}4^{p-1}+\dots+\left(2^{(2+\delta)p+1}\left(\frac{p}{p-1}\right)^{p}n^{\frac{(2+\delta)p}{2}}4^{p-1}\right)^{\tilde m_{0}-1}\right)\nonumber \\\cdot&\left\{2\tilde D(p)+ \frac{1}{2^{m}}\mathbb{E}\left[ \sup\limits_{0\le s \le T}   e^{(1+\delta)sp+(2+\delta)p\int_{0}^{s}\beta_{u}du} \left(\lvert \Delta_{\theta}y^{(0,q)}_{s}  \rvert ^{2+\delta}+\lvert \Delta_{\theta}\tilde y^{(0,q)}_{s}  \rvert ^{2+\delta}\right)^{p}\right]\right\}\nonumber\\
		= & C(p)\left\{2\tilde D(p)+ \frac{1}{2^{m}}\mathbb{E}\left[ \sup\limits_{0\le s \le T}   e^{(1+\delta)sp+(2+\delta)p\int_{0}^{s}\beta_{u}du} \left(\lvert \Delta_{\theta}y^{(0,q)}_{s}  \rvert ^{2+\delta}+\lvert \Delta_{\theta}\tilde y^{(0,q)}_{s}  \rvert ^{2+\delta}\right)^{p}\right]\right\},
	\end{align}	
	where $ C(p) = 1+2^{(2+\delta)p+1}\left(\frac{p}{p-1}\right)^{p}n^{\frac{(2+\delta)p}{2}}4^{p-1}+\dots+\left(2^{(2+\delta)p+1}\left(\frac{p}{p-1}\right)^{p}n^{\frac{(2+\delta)p}{2}}4^{p-1}\right)^{\tilde m_{0}-1}.$
	In view of $ \lvert y^{(m+q)}_{s}-y^{(m)}_{s} \rvert^{(2+\delta)p} \le 2^{(2+\delta)p-1}\left(\lvert y^{(m+q)}_{s}-\theta y^{(m)}_{s} \rvert^{(2+\delta)p}+(1-\theta)^{(2+\delta)p}\lvert y^{(m)}_{s} \rvert^{(2+\delta)p}\right) $, we have
		\begin{align}\label{eq23}
&	\sup\limits_{q \ge 1}\mathbb{E}\left[\sup\limits_{0 \le s \le T} \lvert y^{(m+q)}_{s}-y^{(m)}_{s} \rvert^{(2+\delta)p}e^{(2+\delta)p\int_{0}^{s}\beta_{u}du}\right] \nonumber\\
 \le & 2^{(2+\delta)p-1}(1-\theta)^{(2+\delta)p}\left\{\mathbb{E}\left[\sup\limits_{0 \le s \le T}\lvert y^{(m)}_{s} \rvert^{(2+\delta)p}e^{(2+\delta)p\int_{0}^{s}\beta_{u}du}\right]+ \sup\limits_{q \ge 1}\mathbb{E}\left[\sup\limits_{0 \le s \le T}\lvert \Delta_{\theta}y^{(m,q)}_{s} \rvert^{(2+\delta)p}e^{(2+\delta)p\int_{0}^{s}\beta_{u}du}\right]\right\}\nonumber\\
 \le & 2^{(2+\delta)p-1}(1-\theta)^{(2+\delta)p}\left\{C(p)2\tilde D(p)+\mathbb{E}\left[\sup\limits_{0 \le s \le T}\lvert y^{(m)}_{s} \rvert^{(2+\delta)p}e^{(2+\delta)p\int_{0}^{s}\beta_{u}du}\right]\right\}\nonumber\\+ &2^{(2+\delta)p-1}\frac{C(p)}{2^{m}}\sup\limits_{q \ge 1}\mathbb{E}\left[ \sup\limits_{0\le s \le T}   e^{(1+\delta)sp+(2+\delta)p\int_{0}^{s}\beta_{u}du} \left(\lvert y^{(q)}_{s}-\theta y^{(0)}_{s} \rvert ^{2+\delta}+\lvert y^{(0)}_{s}-\theta y^{(q)}_{s}  \rvert ^{2+\delta}\right)^{p}\right].
 \end{align}	
	Therefore, 
		\begin{align}\label{eq24}
&	\lim\limits_{m \to +\infty}	\sup\limits_{q \ge 1}\mathbb{E}\left[\sup\limits_{0 \le s \le T} \lvert y^{(m+q)}_{s}-y^{(m)}_{s} \rvert^{(2+\delta)p}e^{(2+\delta)p\int_{0}^{s}\beta_{u}du}\right]\nonumber\\ \le & 2^{(2+\delta)p-1}(1-\theta)^{(2+\delta)p}\left\{C(p)2\tilde D(p)+\sup\limits_{m \ge 1}\mathbb{E}\left[\sup\limits_{0 \le s \le T}\lvert y^{(m)}_{s} \rvert^{(2+\delta)p}e^{(2+\delta)p\int_{0}^{s}\beta_{u}du}\right]\right\} .\end{align}

	Sending $ \theta \to 1$, we obtain that
	\[ \lim\limits_{m \to +\infty}\sup\limits_{q \ge 1}\mathbb{E}\left[\sup\limits_{0 \le s \le T}\lvert y^{(m+q)}_{s}-y^{(m)}_{s} \rvert^{(2+\delta)p} e^{(2+\delta)p\int_{0}^{s}\beta_{u}du}\right] = 0,\]
	that is to say, $ \{y^{(m)}e^{\int_{0}^{\cdot} \beta_{u}du}\}_{m=0}^{+\infty} $ is a Cauchy sequence in $ \mathcal{S}^{p(2+\delta)}.$  Thus there exists $ ye^{\int_{0}^{\cdot} \beta_{u}du} \in \mathcal{S}^{p(2+\delta)}$ such that
	\[\lim\limits_{m \to +\infty}\mathbb{E}\left[ \sup\limits_{0 \le s \le T}\lvert y^{(m)}_{s}-y_{s} \rvert^{p(2+\delta)}e^{(2+\delta)p\int_{0}^{s}\beta_{u}du}  \right] = 0.\]
		By passing to a subsequence if necessary, $ y^{(m)} \to y,\, dt\times d\mathbb{P}$-a.s., and $ \sup\limits_{m \ge 0} \lvert y^{(m)}\rvert $ belongs to $ \mathcal{S}^{p(2+\delta)}(\mathbb{R}). $ 

 For any given $i=1,\,\dots,\,n,\, m,\,q \ge 1, $ utilizing It\^o's formula to
	$ \lvert y_{t}^{(m+q);i}-y_{t}^{(m);i}\rvert^{2} $, we can obtain that
	\begin{align}\label{eq25}
	&	\int_{0}^{T} \lvert z_{t}^{(m+q);i}-z_{t}^{(m);i}\rvert^{2}\ d t\nonumber \\ \le 
	&	2\sup\limits_{0 \le s \le T}\lvert  y_{s}^{(m+q);i}-y_{s}^{(m);i} \rvert  \cdot \int_{0}^{T} 
		\left(2\alpha_{s}+A\lvert y_{s}^{(m+q-1)}-y_{s}^{(m-1)} \rvert+\beta_{s} \lvert y_{s}^{(m+q);i}\rvert+\beta_{s} \lvert y_{s}^{(m);i}\rvert \right. \nonumber \\  + & \frac{\delta}{2y_{s}^{(m+q);i}}\lvert z_{s}^{(m+q);i}\rvert^{2}+\frac{\delta}{ 2y_{s}^{(m);i}} \lvert z_{s}^{(m);i}\rvert^{2}\big)\ d s -2\int_{0}^{T}\left(y_{s}^{(m+q);i}-y_{s}^{(m);i}\right)\left(z_{s}^{(m+q);i}-z_{s}^{(m);i}\right) dW_{s}.
	\end{align}
From the BDG inequality and Holder's inequality, we have
		\begin{align}\label{eq26}
	&\mathbb{E}\left[\left( \int_{0}^{T} \lvert z_{t}^{(m+q);i}-z_{t}^{(m);i}\rvert^{2}\ d t \right)^{\frac{p}{2}}\right]\nonumber \\ \le &C \mathbb{E}\left[\sup\limits_{0 \le t \le T} \lvert y_{t}^{(m+q)}-y_{t}^{(m)}\rvert^{\frac{p}{2}}\left(\int_{0}^{T} 
2\alpha_{s}+A\lvert y_{s}^{(m+q-1)}-y_{s}^{(m-1)} \rvert+\beta_{s} \lvert y_{s}^{(m+q);i}\rvert+\beta_{s} \lvert y_{s}^{(m);i}\rvert  \right.\right.\nonumber \\  + & \left.\left.\frac{\delta}{2y_{s}^{(m+q);i}}\lvert z_{s}^{(m+q);i}\rvert^{2}+\frac{\delta}{ 2y_{s}^{(m);i}} \lvert z_{s}^{(m);i}\rvert^{2} d s \right)^{\frac{p}{2}} \right]+ C \mathbb{E}\left[\sup\limits_{0 \le t \le T} \lvert y_{t}^{(m+q);i}-y_{t}^{(m);i}\rvert^{p}\right]
\nonumber \\  \le & C \mathbb{E}\left[\sup\limits_{0 \le t \le T} \lvert y_{t}^{(m+q);i}-y_{t}^{(m);i}\rvert^{p}\right]+C \left\{\mathbb{E}\left[\sup\limits_{0 \le t \le T} \lvert y_{t}^{(m+q)}-y_{t}^{(m)}\rvert^{p(2+\delta)}\right]\right\}^{\frac{1}{2(2+\delta)}} \left\{\mathbb{E}\left[\left(\int_{0}^{T} 
2\alpha_{s}\right.\right.\right.\nonumber \\ + &\left.\left.\left.A\lvert y_{s}^{(m+q-1)}-y_{s}^{(m-1)} \rvert+\beta_{s} \lvert y_{s}^{(m+q);i}\rvert+\beta_{s} \lvert y_{s}^{(m);i}\rvert  +\frac{\delta\lvert z_{s}^{(m+q);i}\rvert^{2}}{2y_{s}^{(m+q);i}}+\frac{\delta\lvert z_{s}^{(m);i}\rvert^{2} }{ 2y_{s}^{(m);i}} d s \right)^{\frac{p(2+\delta)}{3+2\delta}} \right] \right\}^{\frac{3+2\delta}{2(2+\delta)}},	\end{align}
where
\begin{align}\label{eq27}
&\mathbb{E}\left[\left(\int_{0}^{T} 
\alpha_{s} + \lvert y_{s}^{(m+q-1)}-y_{s}^{(m-1)} \rvert+\beta_{s} \lvert y_{s}^{(m+q);i}\rvert+\beta_{s} \lvert y_{s}^{(m);i}\rvert  d s \right)^{\frac{p(2+\delta)}{3+2\delta}} \right] \nonumber \\
\le &C\mathbb{E}\left[  \left(  \int_{0}^{T} \alpha_{s}d s\right)^{p}\!\!+\left(  \int_{0}^{T} \beta_{s}d s\right)^{p}\right] \!\!+C\sup_{m \ge 1}\mathbb{E}\left[ \sup_{0 \le s \le T} \lvert y_{s}^{(m)} \rvert^{p(2+\delta)}\right].
\end{align}
To estimate $  \mathbb{E}\left[\left( \int_{0}^{T}\frac{\delta}{y_{s}^{(m);i}}\lvert z_{s}^{(m);i}\rvert^{2}ds \right)^{p\frac{2+\delta}{3+2\delta}}\right]$, we define for $ \delta >0, $
\[
I(y) := 
\begin{cases}
\frac{y^{\delta+1}}{\delta(\delta+1)}-\frac{y}{\delta}-\frac{\delta}{\delta+1},\quad & y \ge 1,\\
y\ln y-y,\quad & 0< y < 1.
\end{cases}
\]
Then $ I(\cdot) \in \mathcal{C}^{2}((0,\, +\infty))$. Applying It\^o's formula to $ I(y_{s}^{(m);i}) $ and noting the fact that $ \left(\ln x\right)^{2}\mathds{1}_{0<x\le1}\le \frac{1}{x}\mathds{1}_{0<x\le1}$, we obtain 
\begin{align}\label{eq28}
&\mathbb{E}\left[\left(\int_{0}^{T}\frac{\delta\lvert z_{s}^{(m);i}\rvert^{2}}{y_{s}^{(m);i}}ds \right)^{p\frac{2+\delta}{3+2\delta}}\right] 
\nonumber \\\le &C+C\mathbb{E}\left[  \left(  \int_{0}^{T} \alpha_{s}d s\right)^{p}\!\!+\left(  \int_{0}^{T} \beta_{s}d s\right)^{p}\right] \!\!+C\sup_{m \ge 1}\mathbb{E}\left[ \sup_{0 \le s \le T} \lvert y_{s}^{(m)} \rvert^{p(2+\delta)}\right]+ C\sup_{m \ge 1}\mathbb{E}\left[\left( \int_{0}^{T}\lvert z_{s}^{(m)}\rvert^{2} ds\right)^{\frac{p}{2}}\right].\end{align}
Substituting \myeqref{eq27} and \myeqref{eq28} into \myeqref{eq26}, we have
	\begin{align}\label{eq29}
&\mathbb{E}\left[\left( \int_{0}^{T} \lvert z_{t}^{(m+q)}-z_{t}^{(m)}\rvert^{2}\ d t \right)^{\frac{p}{2}}\right]\nonumber \\ \le & C \mathbb{E}\left[\sup\limits_{0 \le t \le T} \lvert y_{t}^{(m+q)}-y_{t}^{(m)}\rvert^{p}\right]+C \left\{\mathbb{E}\left[\sup\limits_{0 \le t \le T} \lvert y_{t}^{(m+q)}-y_{t}^{(m)}\rvert^{p(2+\delta)}\right]\right\}^{\frac{1}{2(2+\delta)}},\end{align}
where $ C $ is a constant independent on $ m,\,q.$ Thus,
	\[\lim\limits_{m \to \infty} \limits\sup_{q \ge 1}\mathbb{E}\left[\left( \int_{0}^{T} \lvert z_{t}^{(m+q)}-z_{t}^{(m)}\rvert^{2}\ d t \right)^{\frac{p}{2}}\right] = 0.\]
	That is, there exists $ z \in \mathcal{M}^{p} $ such that 
	\[\lim\limits_{m \to \infty}\mathbb{E}\left[\left( \int_{0}^{T} \lvert z_{t}^{(m)}-z_{t}\rvert^{2}\ d t \right)^{\frac{p}{2}}\right] = 0.\]
	By passing to a subsequence if necessary, $ z^{(m)} \to z,\, dt\times d\mathbb{P}-$a.s., and $ \sup\limits_{m \ge 0} \lvert z^{(m)}\rvert $ belongs to $ \mathcal{M}^{p}(\mathbb{R}). $	Moreover, it follows from the Fatou's lemma,
	\begin{equation}\label{eq30}
		\mathbb{E}\left[\left(\int_{0}^{T}\frac{\delta\lvert z_{s}^{i}\rvert^{2}}{y_{s}^{i}}ds \right)^{p\frac{2+\delta}{3+2\delta}}\right] < +\infty,\,i=1,\,\dots,\,n,
	\end{equation}
 which would be used later.
	
	In view of the dominated convergence theorem for stochastic integrals, as $ m \to +\infty, $
	\[\sup\limits_{0 \le t \le T} \lvert \int_{t}^{T}\left( z_{r}^{(m);i}-z_{r}^{i}\right)\ d W_{r}\rvert \to 0\ \text{ in } \mathbb{P} .\]
	Therefore, passing to a subsequence the convergence holds \as.
	Now we follows the idea of \citep{Fan2011} to prove that passing to a subsequence,
	\[ \lim\limits_{m\to +\infty}\mathbb{E}\left[\int_{0}^{T}\lvert H_{1}^{i}(r,\,y_{r}^{(m)}) -H_{1}^{i}(r,\,y_{r})+ H_{2}^{i}(r,\,y_{r}^{(m+1);i},\,z^{(m+1);i}_{r}) - H_{2}^{i}(r,\,y_{r}^{i},\,z^{i}_{r})\rvert\ d r \right] =0. \]
	It is clear that $\lim\limits_{m\to +\infty}\mathbb{E}\left[\int_{0}^{T}\lvert H_{1}^{i}(r,\,y_{r}^{(m)}) -H_{1}^{i}(r,\,y_{r}) \rvert\ d r \right] =0$ by (H1) and the dominated convergence theorem.	
	For any $ \eta>0 $, 
	\begin{align}\label{eq31}
		&\int_{0}^{T}\lvert  H_{2}^{i}(r,\,y_{r}^{(m+1);i},\,z^{(m+1);i}_{r}) - H_{2}^{i}(r,\,y_{r}^{i},\,z^{i}_{r})\rvert\ d r \nonumber \\
		=& \int_{0}^{T}\lvert  H_{2}^{i}(r,\,y_{r}^{(m+1);i},\,z^{(m+1);i}_{r}) - H_{2}^{i}(r,\,y_{r}^{i},\,z^{i}_{r})\rvert\left(\mathds{1}_{\{\frac{1}{y_{r}^{(m+1);i} \wedge y_{r}^{i}}\le \eta\}} +\mathds{1}_{\{\frac{1}{y_{r}^{(m+1);i} \wedge y_{r}^{i}}> \eta\}}\right)\ d r.
	\end{align}
	On the one hand, 
	\begin{align}
	&\lvert H_{2}^{i}(r,\,y_{r}^{(m+1);i},\,z^{(m+1);i}_{r}) - H_{2}^{i}(r,\,y_{r}^{i},\,z^{i}_{r})\rvert\mathds{1}_{\{\frac{1}{y_{r}^{(m+1);i} \wedge y_{r}^{i}}\le \eta\}} \nonumber \\
	\le& 2\alpha_{r} + \beta_{r} \left( \sup\limits_{m \ge 1}\lvert y^{m}_{r}
	\rvert+\lvert y_{r}
	\rvert \right) +\frac{\delta \eta}{2}\left( \sup\limits_{m \ge 1}\lvert z^{(m);i}_{r}\rvert^{2} + \lvert z^{i}_{r}\rvert^{2}\right),
	\end{align}
	which is $ dt\times d\mathbb{P}-$ integrable, thus for any $ \eta >0, $
		\[ \lim\limits_{m\to +\infty}\mathbb{E}\left[\int_{0}^{T}\lvert  H_{2}^{i}(r,\,y_{r}^{(m+1);i},\,z^{(m+1);i}_{r}) - H_{2}^{i}(r,\,y_{r}^{i},\,z^{i}_{r})\rvert\mathds{1}_{\{\frac{1}{y_{r}^{(m+1);i} \wedge y_{r}^{i}}\le \eta\}}\ d r \right] =0. \]
		On the other hand, in view of (H2) and \myeqref{eq30}, we denote $ l := p\frac{2+\delta}{3+2\delta},\,k=\frac{l}{l-1} $, and obtain
	\begin{align}\label{eq32}
	&\mathbb{E}\left[\int_{0}^{T}\lvert  H_{2}^{i}(r,\,y_{r}^{(m+1);i},\,z^{(m+1);i}_{r}) - H_{2}^{i}(r,\,y_{r}^{i},\,z^{i}_{r})\rvert\mathds{1}_{\{\frac{1}{y_{r}^{(m+1);i} \wedge y_{r}^{i}}> \eta\}}\ d r \right] \nonumber \\
    \le& \left\{\mathbb{E}\left[\mathds{1}_{\{\sup\limits_{0 \le r \le T}\frac{1}{y_{r}^{(m+1);i} \wedge y_{r}^{i}}>\eta\}} \right] \right\}^{\frac{1}{k}} \left\{\mathbb{E}\left[\left( \int_{0}^{T}\lvert  H_{2}^{i}(r,\,y_{r}^{(m+1);i},\,z^{(m+1);i}_{r}) - H_{2}^{i}(r,\,y_{r}^{i},\,z^{i}_{r})\rvert\ d r\right)^{l}     \right] \right\}^{\frac{1}{l}} \nonumber \\
    \le& \eta^{-\frac{1}{k}}\!\!\left\{\mathbb{E}\left[\sup\limits_{0 \le r \le T}\frac{1}{y_{r}^{(m+1);i} \wedge y_{r}^{i}} \right] \right\}^{\frac{1}{k}}\!\! \left\{\mathbb{E}\left[\left( \int_{0}^{T} 2\alpha_{r} +\beta_{r}\left(\lvert y_{r}^{(m+1)}\rvert +\lvert y_{r}\rvert\right)+\frac{\delta\lvert z^{(m+1);i}_{r}\rvert^{2}}{2y_{r}^{(m+1);i}}+\frac{\delta\lvert z^{i}_{r}\rvert^{2}}{2y_{r}^{i}}\ d r\right)^{l}     \right] \right\}^{\frac{1}{l}}\nonumber \\
    \le& C\eta^{-\frac{1}{k}}\left\{\mathbb{E}\left[\sup\limits_{0 \le r \le T}  \frac{1}{\mathbb{E}_{r}\left[ \xi^{i} \right]} \right] \right\}^{\frac{1}{k}} \nonumber \\
       \le&   C\eta^{-\frac{1}{k}}\left\{\mathbb{E}\left[\sup\limits_{0 \le r \le T} \left[ \mathbb{E}_{r}\left[ \frac{1}{ \xi^{i} } \right]\right]^{q}\right]+1 \right\}^{\frac{1}{k}} \nonumber \\
       \le&   C\eta^{-\frac{1}{k}}\left\{\mathbb{E}\left[ \left(\frac{1}{ \xi^{i} } \right)^{q}\right]+1 \right\}^{\frac{1}{k}}   ,
	\end{align}
	where $q > 1$ is defined in (H2), and $ C>0 $ is indepedent on $ m $. Therefore, by taking a large enough $ \eta $, we can obtain that
	\[\lim\limits_{m\to +\infty}\mathbb{E}\left[\int_{0}^{T}\lvert  H_{2}^{i}(r,\,y_{r}^{(m+1);i},\,z^{(m+1);i}_{r}) - H_{2}^{i}(r,\,y_{r}^{i},\,z^{i}_{r}) \rvert \ d r\right] = 0.\]
	
  Taking limit in \myeqref{eq5}, we get $ (y,\,z) \in \mathcal{S}^{p(2+\delta)}(\mathbb{R}^{n}) \times \mathcal{M}^{p}(\mathbb{R}^{n\times d}) $ is a solution to \myeqref{eq1}. Now it remains to prove the uniqueness.
  
  Assume that $ (\hat{y},\,\hat{z}) \in  \mathcal{S}^{p(2+\delta)}(\mathbb{R}^{n}) \times \mathcal{M}^{p}(\mathbb{R}^{n\times d})$ is another solution to \myeqref{eq1}.	For any given $\theta \in (0,\, 1) $, set
	\[
	\Delta_{\theta}U_{t}:=\frac{y_{t}-\theta \hat y_{t}}{1-\theta}, \  \Delta_{\theta} V_{t} := \frac{z_{t}-\theta \hat z_{t}}{1-\theta},  
	\] 
	and
	\[
	\Delta_{\theta}\tilde U_{t}:=\frac{\hat y_{t}-\theta  y_{t}}{1-\theta}, \  \Delta_{\theta} \tilde V_{t} := \frac{\hat z_{t}-\theta  z_{t}}{1-\theta}.
	\]

		Therfore, $(\Delta_{\theta}U,\,\Delta_{\theta} V)  $ is a solution to the following system of BSDEs
	\begin{align*}
	\Delta_{\theta}U^{i}_{t} = \xi^{i} &+ \int_{t}^{T} \frac{1}{1-\theta}\Big[H_{1}^{i}\left(u,\, y_{u}\right)-\theta H_{1}^{i}\left(u,\,\hat y_{u}\right)\nonumber\\&+H_{2}^{i}\left(u,\, y_{u}^{i},\, z_{u}^{i}\right)-\theta H_{2}^{i}\left(u,\, \hat y_{u}^{i},\, \hat z_{u}^{i}\right)\Big]du - \int_{t}^{T} \Delta_{\theta} V^{i}_{u}d W_{u}.
	\end{align*}	
	It follows from (H1) that
	\begin{align}\label{eq33}
	&\frac{\mathds{1}_{\Delta_{\theta}U^{i}_{u}>0}}{1-\theta}\left[H_{1}^{i}\left(u,\, y_{u}\right)-\theta H_{1}^{i}\left(u,\, \hat y_{u}\right)+H_{2}^{i}\left(u,\, y_{u}^{i},\, z_{u}^{i}\right)-\theta H_{2}^{i}\left(u,\, \hat y_{u}^{i},\, \hat z_{u}^{i}\right)\right] \nonumber \\
	\le &\frac{A}{1-\theta}\lvert y_{u}- \hat y_{u}\rvert+H^{i}_{1}\left(u,\, \hat y_{u}\right)+ \mathds{1}_{\Delta_{\theta}U^{i}_{u}>0}H_{2}^{i}\left(u,\, 	\Delta_{\theta}U^{i}_{u},\, 	\Delta_{\theta}V^{i}_{u}\right)\nonumber \\
	\le &A\left[\lvert \Delta_{\theta}U_{u}\rvert +1+2\lvert  \hat y_{u}\rvert\right]+\alpha_{u}+\beta_{u}(\Delta_{\theta}U^{i}_{u})^{+}+\frac{\delta\mathds{1}_{\Delta_{\theta}U^{i}_{u}>0}}{2\Delta_{\theta}U^{i}_{u}}\lvert\Delta_{\theta}V^{i}_{u}\rvert^{2}.
	\end{align}	
	Applying Tanaka’s formula and It\^o's formula to $ ((\Delta_{\theta}U^{i}_{t})^{+})^{2+\delta}e^{(1+\delta)t+(2+\delta)\int_{0}^{t}\beta_{u}du} $ and using \myeqref{eq33}, we have for $ t \in [0,\,T], $
	\begin{align*}
	&((\Delta_{\theta}U^{i}_{t})^{+})^{2+\delta}e^{(1+\delta)t+(2+\delta)\int_{0}^{t}\beta_{u}du} \nonumber \\ \le & \mathbb{E}_{t}\!\left[ \left(\xi^{i}\right)^{2+\delta}e^{(1+\delta)T+(2+\delta)\int_{0}^{T}\beta_{u}du}+\!\! \int_{t}^{T}\!e^{(1+\delta)s+(2+\delta)\int_{0}^{s}\beta_{u}du}\left(A+\alpha_{s}+2A\lvert \hat y_{s}\rvert+A\lvert \Delta_{\theta}U_{s}\rvert\right)^{2+\delta}d s\right]. 
	\end{align*} 
	Analogously, 
	\begin{align*}
	&((\Delta_{\theta}\tilde{U}^{i}_{t})^{+})^{2+\delta}e^{(1+\delta)t+(2+\delta)\int_{0}^{t}\beta_{u}du} \nonumber \\ \le & \mathbb{E}_{t}\!\left[ \left(\xi^{i}\right)^{2+\delta}e^{(1+\delta)T+(2+\delta)\int_{0}^{T}\beta_{u}du}+\!\! \int_{t}^{T}\!e^{(1+\delta)s+(2+\delta)\int_{0}^{s}\beta_{u}du}\left(A+\alpha_{s}+2A\lvert  y_{s}\rvert+A\lvert \Delta_{\theta}\tilde{U}_{s}\rvert\right)^{2+\delta}d s\right]. 
	\end{align*} 
	Note the facts that
	\[\left(\Delta_{\theta}U^{i}_{t} \right)^{-}  = \frac{\left(\theta \hat y_{t}^{i} -  y^{i}_{t}\right)^{+}}{1-\theta} \le \left(\Delta_{\theta}\tilde{U}^{i}_{t} \right)^{+},\]
	and \[\left(\Delta_{\theta}\tilde{U}^{i}_{t} \right)^{-}  \le \left(\Delta_{\theta}U^{i}_{t} \right)^{+}. \]
	Thus,
	\[\lvert\Delta_{\theta}U_{t} \rvert^{2+\delta},\,\lvert\Delta_{\theta}\tilde{U}_{t} \rvert^{2+\delta} \nonumber \le n^{\frac{2+\delta}{2}}2^{1+\delta}\left(\max_{i}((\Delta_{\theta}U^{i}_{t} )^{+})^{2+\delta}+\max_{i}((\Delta_{\theta}\tilde{U}^{i}_{t} )^{+})^{2+\delta}\right).\]
	Consequently, we have
	\begin{align}\label{eq34}
	&\left( \lvert\Delta_{\theta}U_{t} \rvert ^{2+\delta}+ \lvert\Delta_{\theta}\tilde{U}_{t} \rvert ^{2+\delta} \right)e^{(1+\delta)t+(2+\delta)\int_{0}^{t}\beta_{u}du} \nonumber \\ \le &
	n^{\frac{2+\delta}{2}}2^{3+\delta}\mathbb{E}_{t}\left[ \lvert\xi\rvert^{2+\delta}e^{(1+\delta)T+(2+\delta)\int_{0}^{T}\beta_{u}du}\right]\nonumber \\ + &
	n^{\frac{2+\delta}{2}}2^{5+3\delta}\mathbb{E}_{t}\left[\int_{t}^{T}e^{(1+\delta)s+(2+\delta)\int_{0}^{s}\beta_{u}du}\left(A+\alpha_{s}\right)^{2+\delta}d s\right]\nonumber \\ + &
	n^{\frac{2+\delta}{2}}2^{6+4\delta}A^{2+\delta}\mathbb{E}_{t}\left[\int_{t}^{T}e^{(1+\delta)s+(2+\delta)\int_{0}^{s}\beta_{u}du}\left(\lvert  y_{s}\rvert^{2+\delta}+\lvert \hat y_{s}\rvert^{2+\delta}\right)d s\right]\nonumber \\ + &
	n^{\frac{2+\delta}{2}}2^{3+2\delta}A^{2+\delta}\mathbb{E}_{t}\left[\int_{t}^{T}e^{(1+\delta)s+(2+\delta)\int_{0}^{s}\beta_{u}du}\left(\lvert \Delta_{\theta}\tilde{U}_{s}\rvert^{2+\delta}+\lvert \Delta_{\theta}U_{s}\rvert^{2+\delta}\right)d s\right].
	\end{align}
	Therefore, Doob's $ L^{p} $ inequality gives for any $ t \in [0,\,T], $
	\begin{align}\label{eq35}
	&\mathbb{E}\left[\sup_{t \le s \le T}  \left( \lvert\Delta_{\theta}U_{s} \rvert ^{2+\delta}+ \lvert\Delta_{\theta}\tilde{U}_{s} \rvert ^{2+\delta} \right)^{p}e^{(1+\delta)sp+(2+\delta)p\int_{0}^{s}\beta_{u}du}  \right]\nonumber \\ \le & (\frac{p}{p-1})^{p}4^{p-1}	n^{\frac{(2+\delta)p}{2}}2^{(5+3\delta)p}\mathbb{E}\left[\left( \lvert\xi\rvert^{(2+\delta)p}+\left( \int_{0}^{T}\left(A+\alpha_{s}\right)^{2+\delta}d s\right)^{p}\right)e^{(1+\delta)pT+(2+\delta)p\int_{0}^{T}\beta_{u}du}\right]\nonumber \\ + &
	(\frac{p}{p-1})^{p}4^{p-1}n^{\frac{(2+\delta)p}{2}}2^{(6+4\delta)p}A^{(2+\delta)p}T^{p}\mathbb{E}\left[\sup_{0 \le s \le T}e^{(1+\delta)sp+(2+\delta)p\int_{0}^{s}\beta_{u}du}\left( \lvert y_{s} \rvert^{(2+\delta)p}+\lvert \hat y_{s} \rvert^{(2+\delta)p}\right) \right]\nonumber \\ + &
	(\frac{p}{p-1})^{p}4^{p-1}n^{\frac{(2+\delta)p}{2}}2^{(3+2\delta)p}A^{(2+\delta)p}(T-t)^{p}\nonumber \\ \cdot &\mathbb{E}\left[\sup_{t \le s \le T}e^{(1+\delta)ps+(2+\delta)p\int_{0}^{s}\beta_{u}du}\left(\lvert \Delta_{\theta}\tilde{U}_{s}\rvert^{2+\delta}+\lvert \Delta_{\theta}U_{s}\rvert^{2+\delta}\right)^{p}\right]\nonumber \\ = &
K(p) + \tilde B(t,\,p)\mathbb{E}\left[\sup_{t \le s \le T}e^{(1+\delta)ps+(2+\delta)p\int_{0}^{s}\beta_{u}du}\left(\lvert \Delta_{\theta}\tilde{U}_{s}\rvert^{2+\delta}+\lvert \Delta_{\theta}U_{s}\rvert^{2+\delta}\right)^{p}\right],
	\end{align}
	where $ K(p) $ is a constant independent on $\theta,\,  \tilde B(t,\,p) $ is defined in \myeqref{eq20}. 

	If $ A = 0 , $	it is evident that for any $ t \in [0,\,T], $
	\[\mathbb{E}\left[\sup_{t \le s \le T}\left( \lvert\Delta_{\theta}U_{s} \rvert ^{2+\delta}+ \lvert\Delta_{\theta}\tilde{U}_{s} \rvert ^{2+\delta} \right)^{p}e^{(1+\delta)sp+(2+\delta)p\int_{0}^{s}\beta_{u}du}  \right] \le
K(p). \]
Therefore, \[\mathbb{E}\left[ \sup\limits_{0 \le t \le T}\lvert y_{t}- \hat y_{t} \rvert^{(2+\delta)p}  \right]  \le
(1-\theta)^{(2+\delta)p}2^{(2+\delta)p-1} \left[K(p)+\mathbb{E}\left[ \sup\limits_{0 \le t \le T}\lvert \hat y_{t}\rvert^{(2+\delta)p}\right] \right]. \]
	Sending $ \theta \to 1$, we get $ y = \hat y $ and $ z= \hat z $ on $[0,\,T].  $ 
	If 	$ A > 0 , $ set $ \tilde \varepsilon = \frac{1}{\frac{p}{p-1}4^{\frac{p-1}{p}}2^{3+2\delta+\frac{1}{p}}	A^{2+\delta}n^{\frac{2+\delta}{2}} } $. Then $\tilde \varepsilon > 0. $ Let $ \tilde m_{0} $ be the unique integer such that
	\begin{equation}\label{eq36}
	\frac{T}{\tilde \varepsilon} \le \tilde m_{0} < \frac{T}{\tilde \varepsilon}  +1 .
	\end{equation} 
	
	If $ m_{0} = 1, $ that is, $ 2\tilde B(t,p)\le 2\left(\frac{p}{p-1}\right)^{p}n^{\frac{(2+\delta)p}{2}}4^{p-1}2^{(3+2\delta)p}A^{(2+\delta)p}T^{p} = \left(\frac{T}{\tilde \varepsilon} \right) ^{p}\le 1. $ Therefore, it follows from \myeqref{eq35} that 
	\begin{equation}\label{eq37}
	\mathbb{E}\left[\sup_{0 \le s \le T}\lvert\Delta_{\theta}U_{s} \rvert^{(2+\delta)p} \right]  \le 2K(p) .
	\end{equation} 
		Then we have \[\mathbb{E}\left[ \sup\limits_{0 \le t \le T}\lvert y_{t}- \hat y_{t} \rvert^{(2+\delta)p}  \right]  \le
	(1-\theta)^{(2+\delta)p}2^{(2+\delta)p-1} \left[2K(p)+\mathbb{E}\left[ \sup\limits_{0 \le t \le T}\lvert \hat y_{t}\rvert^{(2+\delta)p}\right] \right]. \]
	Sending $ \theta \to 1$, we get $ y = \hat y $ and $ z= \hat z $ on $[0,\,T]$.

	If $ m_{0} = 2$, then $ \tilde \varepsilon < T \le 2\tilde \varepsilon. $
	Thus for all $ t \in \left[T-\tilde\varepsilon,\,T\right],\, 2\tilde B(t,p)  = \left(\frac{T-t}{\tilde\varepsilon} \right) ^{p}  \le 1. $ Similarly, we have that 
	\begin{equation}\label{eq38}
	\mathbb{E}\left[\sup_{T-\tilde\varepsilon \le s \le T} \left( \lvert\Delta_{\theta}U_{s} \rvert ^{2+\delta}+ \lvert\Delta_{\theta}\tilde{U}_{s} \rvert ^{2+\delta} \right)^{p}e^{(1+\delta)sp+(2+\delta)p\int_{0}^{s}\beta_{u}du}  \right]  \le 2K(p).\end{equation} 
	In particular,
	\begin{equation}\label{eq39}
	\mathbb{E}\left[e^{(1+\delta)(T-\tilde\varepsilon)p+(2+\delta)p\int_{0}^{T-\tilde\varepsilon}\beta_{u}du}\left( \lvert\Delta_{\theta}U_{T-\tilde\varepsilon} \rvert ^{2+\delta}+ \lvert\Delta_{\theta}\tilde{U}_{T-\tilde\varepsilon} \rvert ^{2+\delta} \right)^{p} \right] \le 2K(p). \end{equation} 
	Consider the following system of BSDEs for any $0 \le t \le T-\tilde\varepsilon,$
	\begin{align*}
	\Delta_{\theta}U^{i}_{t} = \Delta_{\theta}U_{T-\tilde\varepsilon}^{i} &+ \int_{t}^{T-\tilde\varepsilon} \frac{1}{1-\theta}\Big[H_{1}^{i}\left(u,\, y_{u}\right)-\theta H_{1}^{i}\left(u,\,\hat y_{u}\right)\nonumber\\
	&+H_{2}^{i}\left(u,\, y_{u}^{i},\, z_{u}^{i}\right)-\theta H_{2}^{i}\left(u,\, \hat y_{u}^{i},\, \hat z_{u}^{i}\right)\Big]du - \int_{t}^{T-\tilde\varepsilon} \Delta_{\theta} V^{i}_{u}d W_{u},
	\end{align*}
	where $i = 1,\,\dots,\, n$. Noting the fact that $ T-\tilde\varepsilon < \tilde\varepsilon $, we obtain an inequality similar to \myeqref{eq7} for any $t \in [0,\,T-\tilde\varepsilon], $
	\begin{align} \label{eq40}
&\mathbb{E}\left[\sup_{t \le s \le T-\tilde\varepsilon}  \left( \lvert\Delta_{\theta}U_{s} \rvert ^{2+\delta}+ \lvert\Delta_{\theta}\tilde{U}_{s} \rvert ^{2+\delta} \right)^{p}e^{(1+\delta)sp+(2+\delta)p\int_{0}^{s}\beta_{u}du}  \right]\nonumber \\ \le & (\frac{p}{p-1})^{p}4^{p-1}	n^{\frac{(2+\delta)p}{2}}2^{(2+\delta)p}\mathbb{E}\left[\left(  \lvert\Delta_{\theta}U_{T-\tilde\varepsilon} \rvert ^{2+\delta}+ \lvert\Delta_{\theta}\tilde{U}_{T-\tilde\varepsilon} \rvert ^{2+\delta} \right)^{p}e^{(1+\delta)pT+(2+\delta)p\int_{0}^{T}\beta_{u}du}\right]\nonumber \\
 + &(\frac{p}{p-1})^{p}4^{p-1}	n^{\frac{(2+\delta)p}{2}}2^{(5+3\delta)p}\mathbb{E}\left[\left( \int_{0}^{T}\left(A+\alpha_{s}\right)^{2+\delta}d s\right)^{p}e^{(1+\delta)pT+(2+\delta)p\int_{0}^{T}\beta_{u}du}\right]\nonumber \\ + &
(\frac{p}{p-1})^{p}4^{p-1}n^{\frac{(2+\delta)p}{2}}2^{(6+4\delta)p}A^{(2+\delta)p}T^{p}\mathbb{E}\left[\sup_{0 \le s \le T}e^{(1+\delta)sp+(2+\delta)p\int_{0}^{s}\beta_{u}du}\left( \lvert y_{s} \rvert^{(2+\delta)p}+\lvert \hat y_{s} \rvert^{(2+\delta)p}\right) \right]\nonumber \\ + &
(\frac{p}{p-1})^{p}4^{p-1}n^{\frac{(2+\delta)p}{2}}2^{(3+2\delta)p}A^{(2+\delta)p}(T-\tilde\varepsilon-t)^{p}\nonumber \\ \cdot &\mathbb{E}\left[\sup_{t \le s \le T-\tilde\varepsilon}e^{(1+\delta)ps+(2+\delta)p\int_{0}^{s}\beta_{u}du}\left(\lvert \Delta_{\theta}\tilde{U}_{s}\rvert^{2+\delta}+\lvert \Delta_{\theta}U_{s}\rvert^{2+\delta}\right)^{p}\right]\nonumber \\ = &
\bar K(p) + \frac{1}{2}\mathbb{E}\left[\sup_{0 \le s \le T-\tilde\varepsilon}e^{(1+\delta)ps+(2+\delta)p\int_{0}^{s}\beta_{u}du}\left(\lvert \Delta_{\theta}\tilde{U}_{s}\rvert^{2+\delta}+\lvert \Delta_{\theta}U_{s}\rvert^{2+\delta}\right)^{p}\right],
	\end{align}
	where \[
	\bar{K}(p) := \left( 1+(\frac{p}{p-1})^{p}4^{p-1}	n^{\frac{(2+\delta)p}{2}}2^{(2+\delta)p+1}\right)K(p).
\]
	Therefore,
	\begin{equation}\label{eq41}
		\mathbb{E}\left[\sup_{0 \le s \le T-\tilde\varepsilon}  \left( \lvert\Delta_{\theta}U_{s} \rvert ^{2+\delta}+ \lvert\Delta_{\theta}\tilde{U}_{s} \rvert ^{2+\delta} \right)^{p}e^{(1+\delta)sp+(2+\delta)p\int_{0}^{s}\beta_{u}du} \right]  
	\le	2\bar{K}(p). 	\end{equation}
	Combining \myeqref{eq38} and \myeqref{eq41} gives that
	\begin{equation}\label{eq42}
\mathbb{E}\left[\sup_{0 \le s \le T}  \left( \lvert\Delta_{\theta}U_{s} \rvert ^{2+\delta}+ \lvert\Delta_{\theta}\tilde{U}_{s} \rvert ^{2+\delta} \right)^{p}e^{(1+\delta)sp+(2+\delta)p\int_{0}^{s}\beta_{u}du} \right]  
\le	2\bar{K}(p). 	\end{equation}

 	Then we have  \[\mathbb{E}\left[ \sup\limits_{0 \le t \le T}\lvert y_{t}- \hat y_{t} \rvert^{(2+\delta)p}  \right]  \le
 	(1-\theta)^{(2+\delta)p}2^{(2+\delta)p-1} \left[2\bar K(p)+\mathbb{E}\left[ \sup\limits_{0 \le t \le T}\lvert \hat y_{t}\rvert^{(2+\delta)p}\right] \right]. \]

	Sending $ \theta \to 1$, we get $ y = \hat y $ and $ z= \hat z $ on $[0,\,T].  $ 
	  Repeating this process, we have
	  	\begin{equation}\label{eq44}
	 \mathbb{E}\left[\sup_{0 \le s \le T}\left( \lvert\Delta_{\theta}U_{s} \rvert ^{2+\delta}+ \lvert\Delta_{\theta}\tilde{U}_{s} \rvert ^{2+\delta} \right)^{p}e^{(1+\delta)sp+(2+\delta)p\int_{0}^{s}\beta_{u}du} \right] 
	 \le 2C(p)K(p),
	 \end{equation}	
	 where $ C(p) = 1+2^{(2+\delta)p+1}\left(\frac{p}{p-1}\right)^{p}n^{\frac{(2+\delta)p}{2}}4^{p-1}+\dots+\left(2^{(2+\delta)p+1}\left(\frac{p}{p-1}\right)^{p}n^{\frac{(2+\delta)p}{2}}4^{p-1}\right)^{\tilde m_{0}-1}.$
	 Then we have  \[\mathbb{E}\left[ \sup\limits_{0 \le t \le T}\lvert y_{t}- \hat y_{t} \rvert^{(2+\delta)p}  \right]  \le
	 (1-\theta)^{(2+\delta)p}2^{(2+\delta)p-1} \left[2C(p)K(p)+\mathbb{E}\left[ \sup\limits_{0 \le t \le T}\lvert \hat y_{t}\rvert^{(2+\delta)p}\right] \right]. \]
	 
	 Sending $ \theta \to 1$, we get $ y = \hat y $ and $ z= \hat z $ on $[0,\,T].  $ 
	 This completes the proof.
\end{proof}

\section{Applications in Regime-Switching Investment Decision-Making}
In a regime-switching market, a Markov chain is used to reflect the status of the market, such as bull or bear markets. There has been extensive research on optimal consumption-investment problems and mean-variance portfolio selection problems in regime-switching markets, see, e.g. \citep{Bauerle2004, Yin2004, Hu2022}.  Recently, \citep{Hu2025} studied optimal consumption–investment problems in a regime switching market with parameters depending on both a Markov chain and a Brownian motion. Following this framework, this paper generalizes the investment model of \citep{Bahlali2018} to a regime switching market with random coefficients. In this chapter, we demonstrate that the solution to the singular system of equations established in the previous chapter represents the value function of a class of optimal investment decision problems.

We first introduce some notations. Let $\alpha_{t} $ denote a continuous-time stationary Markov chain defined on the probability space $(\Omega,\, \mathcal{F},\,\mathbb{P})$. Recall that $(\mathcal{F}_{s})_{s \in [0, \, T]}$ is the $\mathbb{P}$-completion of the filtration generated by $W$. Define the filtrations $ \tilde{\mathcal{F}}_{t}= \sigma\{W_{s},\,\alpha_{s}: 0 \le s \le t \}\vee \mathcal{N}, $ where $\mathcal N$ is the totality of all the $ \mathbb{P}$-null sets of $ \mathcal{F} $. We assume $ W_{t}$ and $\alpha_{t} $ are independent processes, and $\alpha_{t} $ takes values in a finite state space $ G = {1, 2,\,\dots,\,k} $ with $ k > 1.$ The generator of $ \alpha_{t} $ is given by $ Q = (q^{lj})_{k \times k} $ with $ q^{lj} \ge 0 $ for $ l \neq j $ and $ \sum_{j=1}^{k} q^{lj}  = 0$ for each $ l \in G. $

Consider a market with $ m $ stocks following the synamics
\[dS_{i}(t) = S_{i}(t)\left(b_{i}(t, \alpha_{t}) dt + \sum\limits_{j=1}^{d}\sigma_{ij}(t, \alpha_{t})dW^{j}_{t}\right),\,S_{i}(0) = s_{i}(0)\in \mathbb{R}_{+},\]
 where $  b_{i}(t,l) $ is the appreciation rate process and $ \sigma_{i}(t,l)=(\sigma_{i1}(t,l),\,\dots,\,\sigma_{id}(t,l)) $ is the volatility or the dispersion rate process of the $ i $th stock, corresponding to $ \alpha_{t} = l,$ for each $ i=1,\,\dots,\,m,$ and $ l \in G. $
 
 For each $ l \in G,$ define the appreciate vector $ b(t,l)=(b_{1}(t,l),\,\dots,\,b_{m}(t,l))'$, and volatility matrix
 \[
\sigma(t,l) = \begin{pmatrix}
 \sigma_{1}(t,l) \\ \sigma_{2}(t,l) \\ \sigma_{m}(t,l)
 \end{pmatrix} \equiv (\sigma_{ij}(t,l))_{m\times d}.
 \]
 We assume throughout this section that for each $ l \in G,$
 $ b(t,l) $ and $ \sigma(t,l) $ are $ \{\mathcal{F}_{t}\}_{t\ge 0} $-predictable and uniformly bounded in $ t $. We also assume the following non-degeneracy condition
 \[\sigma(t,l)\sigma(t,l)' \ge \mu I,\,\text{ for all } t \in [0,\,T]\text{ and } l \in G,\]
 holds for some constant $ \mu>0. $

 Suppose that the initial market mode $ \alpha_{0} = l_{0}. $ Consider an agent with an initial
wealth $ x > 0 $. These initial conditions are fixed throughout this section. Let us denote by $\pi_{i}(t) $  the proportion  of the agent's wealth to invest in the $ i $th stock at time $ t $. $ \pi(\cdot) = (\pi_{1}(\cdot),\,\dots,\,\pi_{m}(\cdot))'$ is called a portfolio of the agent.
 Denote by $ X(t) $ the total wealth of the agent at time $ t $. Assuming that the trading of shares takes place continuously and that transaction cost and consumption are not considered, then one has that
 \begin{align*}
 X(t) = & x+\int_{0}^{t} X(s)\pi'(s)b(s,l)d s + \int_{0}^{t} X(s)\pi'(s)\sigma(s,l)d W_{s}\nonumber \\
       =&x+\int_{0}^{t} X(s)p(s,l)\lambda(s,l) d s + \int_{0}^{t} X(s)p(s,l)d W_{s},
 \end{align*}
 where $  p(s,l) := \pi'(s)\sigma(s,l),\,\lambda(s,l) := \sigma(s,l)'(\sigma(s,l)\sigma(s,l)')^{-1}b(s,l).$
 Then $ p(s,l) $ and $\lambda(s,l)$ are valued in $ \mathbb{R}^{1\times d} $ and $ \mathbb{R}^{ d\times 1},$ respectively. Let $p(s) = (p(s,1),\,\dots,\,p(s,k))$ denote the strategy vector. The set of admissible trading strategies is defined by
 \begin{align*}
 \mathcal{A} = \{p:\Omega \times [0,\,T] \to \mathbb{R}^{1\times d}\mid 	& p \text{ is } \{\tilde{\mathcal{F}}_{t}\}_{t\in[0,T]} \text{-predictable, }\int_{0}^{T}\lvert p_{s}\rvert^{2}ds<+\infty \text{ \as},\\ &\text{ and } (X(t))_{t\in[0,T]} \in \text{class} (D).\}
 \end{align*}

 In this section, the agent's problem is to maximize his expected utility as in \citep{Bahlali2018}
 \begin{equation}\label{eq3.1}
 V(x,l) = \sup\limits_{p\in\mathcal{A}} \mathbb{E}\left[\frac{(X(T)\zeta(l))^{\gamma}}{\gamma}\right],\end{equation}
 where $\gamma \in (0,\, 1)$, and for each $l \in G,\,\zeta(l)$ is an $\mathcal{F}_{T}$-measurable random variable representing the proportion of wealth that the investor receives or pays at time $ T.$
 \begin{proposition}
 Assume $ \gamma \in (0,\,\frac{1}{3})\bigcup(\frac{1}{3},\,1).$ Further assume that there exists a constant $D>0 $ such that $ \xi = (\xi^{1},\dots,\xi^{k})' = ((\zeta(1))^{\gamma},\,\dots,\,(\zeta(k))^{\gamma})' $ satisfies $ \frac{1}{D}\le \zeta(l) \le D,\,l=1,\dots,k$. Then the value function in \myeqref{eq3.1} is given by \[V(x,l)=\frac{x^{\gamma}}{\gamma}Y^{l}_{0},\]
  where $ (Y^{l}_{t},\,Z^{l}_{t})_{l=1,\dots,k}$ is the unique solution of the multi-dimensional BSDE $ (\xi,H), $ with $H= (H^{1},\dots,H^{k})'$ is defined for $t\in [0,\,T], y=(y^{1},\dots,y^{k})'\in \mathbb{R}^{k},\,z=(z^{1},\dots,z^{k})' \in \mathbb{R}^{k\times d}$ by ($ l=1,\dots,k $)
 \[H^{l}(t,y,z) = 
 \begin{cases}
 \frac{\gamma y^{l}}{2(1-\gamma)} \lvert \lambda(t,l)' + \frac{1}{y^{l}}z^{l} \rvert^{2} + \sum_{j=1}^{k}q^{lj}y^{j}, \text{ if } y^{l} > 0,\\
 +\infty, \text{else.}                                                         
 \end{cases}\]
 Moreover, the optimal admissible strategy is given by
  \begin{equation} \label{eq3.2}
   p^{\ast}(s,l) = \frac{1}{1-\gamma}\left(\lambda(s,l)'+\frac{1}{Y(s,l)}Z(s,l)\right),\,s \in [0,\,T],\,l=1,\dots,k.
   \end{equation}
 \end{proposition}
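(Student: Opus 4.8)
The plan is to split the argument into an existence/uniqueness step for the driving BSDE and a martingale-optimality (verification) step. For existence, I would fit $(\xi,H)$ into the bounded framework of \sref{Corollary}{corollary}, which is available because $\xi^l=(\zeta(l))^\gamma\in[D^{-\gamma},D^{\gamma}]$ is bounded and bounded away from $0$. Expanding the square,
\[
H^l(t,y,z)=\frac{\gamma y^l}{2(1-\gamma)}\lvert\lambda(t,l)\rvert^2+\frac{\gamma}{1-\gamma}\lambda(t,l)'z^l+\frac{\gamma}{2(1-\gamma)y^l}\lvert z^l\rvert^2+\sum_{j=1}^k q^{lj}y^j,
\]
and the non-degeneracy and boundedness of $b,\sigma$ make each $\lambda(\cdot,l)$ bounded. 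The one obstruction to a direct application is the diagonal coupling $q^{ll}y^l$ (with $q^{ll}\le0$), which makes the natural $H_2^l$ fail the sign condition $H_2^l\ge0$. I would remove it with the change of variables $\bar Y^l_t:=e^{\mu t}Y^l_t,\ \bar Z^l_t:=e^{\mu t}Z^l_t$, $\mu:=\min_l q^{ll}\le0$; since the singular quadratic part is positively $1$-homogeneous in $(y,z)$, this preserves its form and only shifts the linear coupling, giving
\[
\bar H^l=\underbrace{\tfrac{\gamma\bar y^l}{2(1-\gamma)}\Big\lvert\lambda(t,l)'+\tfrac{1}{\bar y^l}\bar z^l\Big\rvert^2}_{H_2^l}+\underbrace{\sum_{j\neq l}q^{lj}(\bar y^j)^{+}+(q^{ll}-\mu)(\bar y^l)^{+}}_{H_1^l}.
\]
Here $H_1^l\ge0$ is Lipschitz of linear growth, while $H_2^l\ge0$ is the perspective of $u\mapsto\lvert\lambda'+u\rvert^2$, hence convex, and a Young split of the cross term yields $0\le H_2^l\le\beta_s\bar y^l+\tfrac{\delta}{2\bar y^l}\lvert\bar z^l\rvert^2$ with $\beta_s=\gamma\lvert\lambda(s,l)\rvert^2/(1-\gamma)$ bounded and $\delta=2\gamma/(1-\gamma)$. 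The requirement $\delta\neq1$ is precisely $\gamma\neq1/3$, which is why that value is excluded. Thus (H1) holds, \sref{Corollary}{corollary} gives a unique positive $\bar Y\in\mathcal S^\infty$, $\bar Z\in BMO$, and undoing the transform returns $(Y,Z)$; a comparison of $H^l$ with the linear driver $q^{ll}y^l$ then gives $Y^l_t\ge e^{q^{ll}T}D^{-\gamma}>0$, so $Z^l/Y^l\in BMO$ and the candidate $p^{\ast}$ is a $BMO$ integrand.

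For the verification I would set, for admissible $p$, $R^p_t:=\tfrac{(X_t)^\gamma}{\gamma}Y^{\alpha_t}_t>0$ and apply It\^o's formula to the jump--diffusion $(X,\alpha)$. The key cancellation is that the compensator of the regime jumps of $Y^{\alpha_t}$ is exactly $\sum_j q^{\alpha_t j}Y^j_t\,dt$, which annihilates the Markov coupling inside $H^{\alpha_t}$; hence along the chain $Y^{\alpha_t}$ has continuous drift $-\tfrac{\gamma Y^{\alpha_t}}{2(1-\gamma)}\lvert\lambda(t,\alpha_t)'+Z^{\alpha_t}/Y^{\alpha_t}\rvert^2$. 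Collecting all drift contributions (including the cross variation $X^\gamma\,p\,(Z^{\alpha_t})'\,dt$) the drift of $R^p$ equals $X^\gamma_tY^{\alpha_t}_t\,\Phi_t(p_t)$ with
\[
\Phi_t(p)=p\big(\lambda(t,\alpha_t)'+Z^{\alpha_t}_t/Y^{\alpha_t}_t\big)'-\tfrac{1-\gamma}{2}\lvert p\rvert^2-\tfrac{1}{2(1-\gamma)}\big\lvert\lambda(t,\alpha_t)'+Z^{\alpha_t}_t/Y^{\alpha_t}_t\big\rvert^2 .
\]
Since $1-\gamma>0$, $\Phi_t$ is strictly concave, its maximiser is $p^{\ast}(t,\alpha_t)=\tfrac{1}{1-\gamma}\big(\lambda(t,\alpha_t)'+Z^{\alpha_t}_t/Y^{\alpha_t}_t\big)$ with $\Phi_t(p^{\ast})=0$, and $\Phi_t(p)\le0$ otherwise, which already identifies \myeqref{eq3.2}.

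I would then convert the drift signs into the value identity. As $R^p\ge0$ has nonincreasing drift it is a nonnegative local supermartingale, hence a true supermartingale, so $\mathbb{E}[R^p_T]\le R^p_0$; with $R^p_T=\tfrac{(X_T)^\gamma}{\gamma}(\zeta(\alpha_T))^\gamma$ and $R^p_0=\tfrac{x^\gamma}{\gamma}Y^{l_0}_0$ this gives $\mathbb{E}[(X_T\zeta(\alpha_T))^\gamma/\gamma]\le\tfrac{x^\gamma}{\gamma}Y^{l_0}_0$ for every $p\in\mathcal A$. For $p^{\ast}$ the drift vanishes, so $R^{p^{\ast}}$ is a nonnegative local martingale, and to reach equality I must upgrade it to a genuine one: I would check $p^{\ast}\in\mathcal A$, using $p^{\ast}\in BMO$ to obtain $\int_0^T\lvert p^{\ast}\rvert^2<\infty$ and, via the reverse H\"older inequality for the $BMO$ stochastic exponential together with the bounded drift $p^{\ast}\lambda$, that $X^{\ast}=x\,\mathscr{E}(p^{\ast}\!\cdot W)\exp(\int_0^{\cdot}p^{\ast}\lambda\,ds)$ is of class $(D)$; then, $Y$ being bounded, $R^{p^{\ast}}\le C(1+X^{\ast})$ is of class $(D)$, so $R^{p^{\ast}}-R^{p^{\ast}}_0$ is a uniformly integrable (true) martingale and $\mathbb{E}[R^{p^{\ast}}_T]=R^{p^{\ast}}_0$. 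This yields $V(x,l_0)=\tfrac{x^\gamma}{\gamma}Y^{l_0}_0$ with optimiser $p^{\ast}$.

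The hard part will be twofold: the jump--diffusion It\^o computation establishing the exact cancellation of the generator $Q$ against the regime jumps of $Y^{\alpha_t}$, and the class-$(D)$/uniform-integrability verification that promotes $R^{p^{\ast}}$ from a local to a true martingale. The latter is precisely where the $BMO$ regularity of $Z/Y$, and hence the strictly positive lower bound on $Y$ secured in the first step, is indispensable.
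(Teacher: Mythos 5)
Your proposal follows essentially the same route as the paper: an exponential change of variables to absorb the diagonal part of $Q$ so that Corollary \ref{corollary} applies with $\delta=2\gamma/(1-\gamma)$ (whence the exclusion $\gamma\neq 1/3$), followed by the standard verification via It\^o's formula on $\tfrac{X^{\gamma}}{\gamma}Y^{\alpha_t}$, cancellation of the Markov coupling against the jump compensator, completion of the square to identify $p^{\ast}$, a supermartingale bound for general $p$, and a $BMO$/reverse-H\"older argument putting $X^{p^{\ast}}$ in class $(D)$ --- the paper's per-coordinate scaling $e^{-q^{ll}t}$ versus your uniform $e^{\mu t}$, and its localization-plus-Fatou versus your nonnegative-local-supermartingale argument, are immaterial differences. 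The one imprecision is the phrase ``bounded drift $p^{\ast}\lambda$'': $p^{\ast}$ is only a $BMO$ integrand, so the factor $\exp(\int_0^{\cdot}p^{\ast}\lambda\,ds)$ must be controlled via John--Nirenberg or, as the paper does, by a Girsanov change of measure using the bounded $\lambda$; the tools you cite do cover this.
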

 \begin{proof}
 	First, we prove the BSDE $ (\xi,H)$ has a unique solution in $ \mathcal{S}^{\infty}(\mathbb{R}^{k}) \times BMO(\mathbb{R}^{k\times d}).$ Let $f(x):=\max(x,\,0),\,x\in \mathbb{R}$ denote the positive part function.
We introduce a multi-dimensional BSDE: 
 	\begin{equation}\label{eq3.4}
 	y_{t}^{l}= \xi^{l}e^{q^{ll}T}+\int_{t}^{T} g^{l}(s,y_{s},z_{s}) ds - \int_{t}^{T} z_{s}^{l} d W_{s},\,t\in[0,T],\,l=1,\dots,k,
 	\end{equation}
 	where $ (g^{1},\,\dots,\,g^{k}) $ is defined for $ t\in[0,\,T],\,y \in \mathbb{R}^{k},\,z\in\mathbb{R}^{k \times d} $ by
  \[g^{l}(t,y,z) = 
 \begin{cases} 
 \frac{\gamma y^{l}}{2(1-\gamma)} \lvert \lambda(t,l)' + \frac{1}{y^{l}}z^{l} \rvert^{2} +\sum_{j=1,j\neq l}^{k}q^{lj}e^{(q^{ll}-q^{jj})t}f(y^{j}), \text{ if } y^{l} > 0,\\
 +\infty, \text{else.}                                                         
 \end{cases}\]
 Note the facts that
 \begin{itemize}
 	\item[$ \cdot $] for any $t\in[0,\,T],\,(y,\,z)\in (0, \,+\infty)\times \mathbb{R}^{1\times d},$ $ \frac{\gamma y}{2(1-\gamma)}\lvert\lambda(t,l)' + \frac{1}{y}z \rvert^{2} \le \frac{\gamma\lvert\lambda(t,l)\rvert^{2}}{1-\gamma} y + \frac{\gamma\lvert z \rvert^{2}}{(1-\gamma)y};$
 	\item[$ \cdot $] for any $t\in[0,\,T],\, \frac{\gamma y}{2(1-\gamma)}\lvert\lambda(t,l)' + \frac{1}{y}z \rvert^{2} $ is convex in $ (y,\,z)\in (0, \,+\infty)\times \mathbb{R}^{1\times d};$
    \item[$ \cdot $] for each $j= 1,\dots,k,\,j\neq l,$
    $0 \le \sum_{j=1,j\neq l}^{k}q^{lj}e^{(q^{ll}-q^{jj})t}f(y^{j}) $ is Lipschitz in $ y \in \mathbb{R}^{k} $ uniformly in $ t\in[0,T].$
 \end{itemize}
 	According to \sref{Corolary}{corollary}, \myeqref{eq3.4} admits a unique solution $ (y,\,z) \in \mathcal{S}^{\infty}(\mathbb{R}^{k}) \times BMO(\mathbb{R}^{k\times d})$ with $\delta$ defined in (H1) is given by $\delta = \frac{2\gamma}{1-\gamma}.$ Moreover, $ (y,\,z) $ is a positive solution. Applying It\^o's formula to $Y_{t}^{l} := y_{t}^{l}e^{-q^{ll}t},\,Z_{t}^{l} := z_{t}^{l}e^{-q^{ll}t},\,l = 1,\,\dots,\,k, $ we obtain that $ (Y,\,Z) \in \mathcal{S}^{\infty}(\mathbb{R}^{k}) \times BMO(\mathbb{R}^{k\times d})$  is the unique solution to the BSDE ($ \xi,\,H $).
 
 	We obtain remaining arguments by combining Theorem 3.7 in \citep{Hu2025} and Proposition 5.1 in \citep{Bahlali2018}.  Denote $ Y(t,\alpha_{t}),\,Z(t,\alpha_{t}),\,p(t,\alpha_{t}),\,\lambda(t,\alpha_{t}) $ by $Y_{t},\,Z_{t},\,p_{t},\,\lambda_{t},$ respectively. For all $ p \in \mathcal{A},$ applying It\^o's formula to $ \frac{(X(t)^{\gamma}}{\gamma}Y^{\alpha_{t}}_{t}$ and using Lemma 4.3 in \citep{Hu2020}, we have that
 	\begin{align}
 	&\frac{(X(t))^{\gamma}}{\gamma}Y^{\alpha_{t}}_{t}=\frac{x^{\gamma}}{\gamma}Y^{l_{0}}+\int_{0}^{t} (Y_{s}p_{s}+\frac{1}{\gamma}Z_{s}) dW_{s}  \nonumber \\+& \int_{0}^{t} (X(s))^{\gamma} \left(Y_{s}p_{s}\lambda_{s}+p_{s}Z_{s}'+\frac{\gamma-1}{2}Y_{s}\lvert p_{s}\rvert^{2}- \frac{Y_{s}}{2(1-\gamma)} \lvert \lambda(t,l)' + \frac{1}{Y_{s}}Z_{s}\rvert^{2}\right)  ds   
 	  \nonumber \\+&\int_{0}^{t} \frac{(X(s))^{\gamma}}{\gamma}\sum_{j,j'=1}^{k}(Y^{j}_{s}-Y^{j'}_{s}) \mathds{1}_{\{\alpha_{s-}=j'\}}d\tilde N^{j'j}_{s},
 	\end{align}
 	where $\tilde N^{j'j}_{t} = N^{j'j}_{t} - q^{j'j}t $ is the compensated Poisson martingales under the filtration $ \{\tilde{\mathcal{F}}_{t}\}_{t\in[0,T]}. $
 	
 Since $ \sup_{p \in \mathcal{A}}\left(Y_{s}p_{s}\lambda_{s}+p_{s}Z_{s}'+\frac{\gamma-1}{2}\lvert p_{s} \rvert^{2}\right)$ is obtained at $p^{\ast}_{s}= \frac{1}{1-\gamma}\left(\lambda_{s}'+\frac{1}{Y_{s}}Z_{s}\right),$ that is, $ Y_{s}p_{s}\lambda_{s}+p_{s}Z_{s}'+\frac{\gamma-1}{2}\lvert p_{s} \rvert^{2}\le \frac{Y_{s}}{2(1-\gamma)} \lvert \lambda(t,l)' + \frac{1}{Y_{s}}Z_{s}\rvert^{2},$ there exists a sequence of stopping times $\{\tau_{n}\}_{n \ge 1}$ with $ \tau_{n} \to T $ as $ n \to \infty $ such that
 	\[\mathbb{E}\left[\frac{(X(\tau_{n}))^{\gamma}}{\gamma}Y_{\tau_{n}}\right] \le \frac{x^{\gamma}}{\gamma}Y_{0}^{l_{0}}. \]
 	Applying Fatou's lemma yields that
 	\[\mathbb{E}\left[\frac{(X(T))^{\gamma}}{\gamma}\xi\right] \le \frac{x^{\gamma}}{\gamma}Y_{0}^{l_{0}}. \]  
 	If $ p^{\ast} \in \mathcal{A}, $ then the dominated convergence theorem shows that the equality holds if and only if $ p = p^{\ast}. $ It remains to prove  $ p^{\ast} \in \mathcal{A}.$ Specifically, we will show that for each fixed $l = 1,\,\dots,\,k,\, (X(t))_{t\in[0,T]} \in \text{class} (D)$ if $ p = p^{\ast}. $
 	
 Since $ \lambda(t,l) $ is uniformly bounded in $ t,$ for all $t \in [0,\,T],\,Y^{l}_{t} \ge \mathbb{E}_{t}\left[\xi^{l}\right]\ge \frac{1}{D^{\gamma}},$ and $ Z^{l}\cdot W \in BMO(\mathbb{R}^{1\times d}),$ we obtain $ p^{\ast}\cdot W\in BMO(\mathbb{R}^{1\times d}),$ and for any $ q>1, $ \[ \mathbb{E}\left[ q\int_{0}^{T}\lambda(t,l)dW_{t}-\frac{q}{2} \int_{0}^{T}\lvert \lambda(t,l) \rvert^{2} dt\right]<\infty.\]
 Therefore, $ dW^{\lambda}_{t}= dW_{t}+ \lambda(t,l)' dt$ is a new Brownnian motion under the probability measure defined by $ \frac{dQ}{dP} \mid_{ \mathcal{F}_{T}} = \mathscr{E}(-\int_{0}^{\cdot}\lambda(t,l)dW_{t})_{T},$ and
$ \int_{0}^{t} p^{\ast}(s,l) \lambda(s,l) ds+\int_{0}^{t} p^{\ast}(s,l) dW_{s} $ is a $ BMO$ martingale under $ Q. $
 
 Thus, $ X(t) = x e^{\int_{0}^{t} p^{\ast}(s,l) \lambda(s,l)  - \frac{1}{2}\lvert p^{\ast}(s,l) \rvert^{2} ds+ \int_{0}^{t} p^{\ast}(s,l) dW_{s} } = x\mathscr{E}(\int_{0}^{\cdot} p^{\ast}(s,l) \lambda(s,l) ds+\int_{0}^{\cdot} p^{\ast}(s,l) dW_{s})_{t}$ is a uniformly integrable martingale under $ Q,$ and for some $ q>1,\,\mathbb{E}^{Q}[(X(T))^{q}]<\infty$ by Theorem 3.4 in \citep{Kazamaki2006}. From Corollary 3.4 in \citep{Kazamaki2006}, we have for $ \theta >1 $ such that $ \frac{1}{\theta}+\frac{1}{q}=1$ and any stopping time $\tau \in \mathcal{T}_{0,T},$ there exists a constant $ C>0 $ such that
 $X(\tau) \le C\left( \mathbb{E}^{Q}[(X(T))^{\frac{1}{\theta}}\mid_{ \mathcal{F}_{\tau}}] \right)^{\theta}.$ Then we have 
 \begin{align*}
 	&\mathbb{E}\left[\sup\limits_{\tau \in \mathcal{T}_{0,T}}X(\tau) \right] \nonumber \\ 
 	\le & C\mathbb{E}\left[\sup\limits_{\tau \in \mathcal{T}_{0,T}}X(\tau) e^{-\frac{1}{q}\int_{0}^{T} \frac{1}{2}\lvert\lambda(s,l) \rvert^{2} ds+ \frac{1}{q}\int_{0}^{T} \lambda(s,l)  dW_{s} }   e^{-\frac{1}{q}\int_{0}^{T} \frac{1}{2}\lvert\lambda(s,l) \rvert^{2} ds- \frac{1}{q}\int_{0}^{T} \lambda(s,l)  dW_{s} }       \right] \nonumber \\
 	\le & C\left(\mathbb{E}^{Q}\left[  \sup\limits_{\tau \in \mathcal{T}_{0,T}}(X(\tau))^{q}\right]  \right)^{\frac{1}{q}}\left(\mathbb{E}\left[e^{-\frac{\theta}{q}\int_{0}^{T} \frac{1}{2}\lvert\lambda(s,l) \rvert^{2} ds+ \frac{\theta}{q}\int_{0}^{T} \lambda(s,l)  dW_{s} } \right] \right)^{\frac{1}{\theta}} \nonumber \\
 	\le & C\left( \mathbb{E}^{Q}\left[  \sup\limits_{\tau \in \mathcal{T}_{0,T}}\left( \mathbb{E}^{Q}[(X(T))^{\frac{1}{\theta}}\mid_{ \mathcal{F}_{\tau}}] \right)^{q\theta}  \right]\right)^{\frac{1}{q}} \le  C\left( \mathbb{E}^{Q}\left[(X(T))^{q}\right]\right)^{\frac{1}{q}} < \infty.
 \end{align*}
 Therefore, $ (X(t))_{t\in[0,T]} \in \text{class} (D),$ which completes the proof.
 \end{proof}

\renewcommand{\refname}{\normalsize References}


\begin{thebibliography}{30} 
	
	
	\bibitem[Pardoux and Peng(1990)]{Peng1990}
	Pardoux, E., Peng, S. Adapted solution of a backward stochastic
	differential equation. {\em Syst. Control Lett.} {\bf 1990}, {\em 14}, 55--61.
	
	\bibitem[Briand and Hu(2006)]{BriandHu2006}
	Briand, P., Hu, Y. {BSDE} with quadratic growth and unbounded terminal value. {\em Probab. Theory Relat. Field} {\bf 2006}, {\em 136}, 604--618. DOI:10.1007/s00440-006-0497-0.
	
	\bibitem[Briand and Hu(2008)]{BriandHu2008}
	Briand, P., Hu, Y. Quadratic {BSDEs} with convex generators and unbounded terminal
	conditions. {\em Probab. Theory Relat. Field} {\bf 2008}, {\em 141}, 543--567. DOI: 10.1007/s00440-007-0093-y.
	
	\bibitem[Tevzadze(2008)]{Tevzadze2008}
	Tevzadze, R. Solvability of backward stochastic differential equations with quadratic growth. {\em Stoch. Process. Their Appl. } {\bf 2008}, {\em 118}, 503--515. DOI: 10.1016/j.spa.2007.05.009.
	
	\bibitem[Barrieu and Karoui(2013)]{2013Monotone}
	Barrieu, P., Karoui, N. E. Monotone stability of quadratic semimartingales
	with applications to unbounded general quadratic {BSDEs}. {\em Ann. Probab.} {\bf 2013}, {\em 41}, 1831--1863. DOI: 10.1214/12-AOP743.
	
	\bibitem[Briand and Richou(2019)]{BriandRichou2017}
	Briand, P., Richou, A. On the uniqueness of solutions to quadratic {BSDEs}
	with non-convex generators. In: {\em Frontiers in Stochastic Analysis--BSDEs, SPDEs and their Applications}; Cohen, S. N., Gy\"ongy, I.,
	dos Reis, G., Siska, D., Szpruch, {\L}. Eds.; Springer: Cham, Switzerland, 2019; pp. 89--107.
	
	\bibitem[Kobylanski(2000)]{Kobylanski2000}
	Kobylanski, M. Backward stochastic differential equations and partial differential equations with quadratic growth. \emph{Ann. Probab.} {\bf 2000}, {\em 28}, 558--602. DOI: 10.1214/aop/1019160253.
	
	\bibitem[Duffie and Epstein(1992)]{DuffieEpstein1992}
	Duffie, D., Epstein, L.G. Stochastic differential utility. {\em Econometrica} {\bf 1992}, {\em 60}, 353--394. DOI:10.2307/2951600.
	
	
	\bibitem[Essaky and Hassani(2011)]{Essaky2011}
	Essaky, E., Hassani, M. General existence results for reflected {BSDE} and
	{BSDE} {\em Bull. Sci. Math.} {\bf 2011}, {\em 135}, 442--466. DOI:
	10.1016/j.bulsci.2011.04.003.
	
	\bibitem[Bahlali, Eddahbi, and Ouknine(2017)]{Bahlali2017}
	Bahlali, K., Eddahbi, M., Ouknine, Y. Quadratic {BSDE} with
	$\mathbb{L}^{2}$-terminal data: {Krylov’s estimate, It\^o-Krylov’s}
	formula and existence results. \emph{Ann. Probab.} {\bf 2017}, {\em 45}, 2377--2397. DOI: 10.1214/16-AOP1115.
	
	\bibitem[Bahlali and Tangpi(2018)]{Bahlali2018}
	Bahlali, K., Tangpi, L. {BSDEs} driven by $\lvert z \rvert^{2}/y$ and applications to
	{PDEs} and decision theory. Available online: https://arxiv.org/pdf/1810.05664.pdf. (accessed on 1th June 2021).
	
	\bibitem[Yang(2017)]{Yang2017}
	Yang, H. {Lp} solutions of quadratic {BSDEs}. Available online: https://arxiv.org/pdf/1506.08146v1.pdf (accessed on 1th June 2021).
	
	\bibitem[Zheng(2017)]{Zheng2017}
	Zheng, S., Zhang, L., Feng, L. On the backward stochastic differential equation with generator $ f(y)|z|^{2}$. \emph{J. Math. Anal. Appl.} {\bf 2021}, {\em 500}.
	
	\bibitem[Tian(2023)]{Tian2023}
	Tian, D. Pricing principle via Tsallis relative entropy in incomplete markets. \emph{SIAM J. Financ. Math.} {\bf 2023}, {\em 14}, 250--278.	
	
	
	\bibitem[Zheng(2024)]{Zheng2024}
	Zheng, S. Well-posedness of quadratic RBSDEs and BSDEs with one-sided growth restrictions. Available online: https://arxiv.org/abs/2412.21172 (accessed on 1th Jan. 2025).
	
	
	\bibitem[Fan, Hu, and Tang(2023)]{Fan2023}
	Fan, S., Hu, Y., Tang, S. Multi-dimensional backward stochastic differential equations of diagonally quadratic generators: the general result. \emph{J. Differ. Equ.} {\bf 2023}, {\em 368}, 105--140.
	
	\bibitem[Bahlali(2019)]{Bahlali201903}
	Bahlali, K. Solving unbounded quadratic BSDEs by a domination method. Available online: https://arxiv.org/pdf/1903.11325 (accessed on 1th June 2021).
		
	
	\bibitem[Bahlali et al.(2013)]{Bahlali2013}
	Bahlali, K, Eddahbi, M, Ouknine, Y. Solvability of some quadratic BSDEs without exponential moments. \emph{C. R. Math.} {\bf 2013}, {\em 351}, 229--233. DOI: 10.1016/j.crma.2013.04.003.

	
	\bibitem[Bahlali, Eddahbi, and Ouknine(2015)]{Bahlali2015}
	Bahlali, K., Essaky, E., Hassani, M. Existence and uniqueness of multidimensional {BSDEs} and of systems of degenerate {PDEs} with superlinear growth generator. \emph{SIAM J. Math. Anal.} {\bf 2015}, {\em 47}, 4251--4288.
	
	
	
	
	\bibitem[Peng(1997)]{1997Backward}
	Peng, S. Backward {SDE} and related g-expectation. In: \emph{Backward Stochastic Differential Equations, Pitman Research Notes in Mathematics Series, No.364}; Karoui, N. E., Mazliak, L. Eds.;Longman: Essex, England, 1997, pp. 141--159.
	
	
	\bibitem[Fan(2012)]{Fan2012}
	Fan, S. J., Jiang, L. $L^{p}(p>1)$ solutions for one-dimensional BSDEs with linear-growth generators. {\emph  J. Appl. Math. Comput.} {\bf 2012}, {\em 38}, 295--304.
	
	\bibitem[FanJiang(2011)]{Fan2011}
	Fan, S., Jiang, L. Existence and uniqueness result for a backward stochastic differential equation whose generator is Lipschitz continuous in y and uniformly continuous in z. \emph{J. Appl. Math. Comput.} {\bf 2011}, {\em 36}, 1--10.


	\bibitem[Bauerle(2004)]{Bauerle2004}
    Bauerle, N., Rieder, U. Portfolio optimization with Markov-modulated stock prices and interest rates.\emph{
    IEEE Trans. Autom. Control}{\bf 2004}, {\em 49}, 442--447.

	\bibitem[Yin(2004)]{Yin2004}
     Yin, G., Zhou, X. Markowitz’s mean-variance portfolio selection with regime switching: from
    discrete-time models to their continuous-time limits. \emph{IEEE Trans. Autom. Control} {\bf 2004}, {\em 49}, 349--360.
	
	\bibitem[Hu(2022)]{Hu2022}
	Hu, Y., Shi, X., Xu, Z.: Constrained stochastic LQ control with regime switching and application to
	portfolio selection. \emph{Ann. Appl. Probab.} {\bf 2022}, {\em 32}, 426--460.
	
	\bibitem[Hu(2025)]{Hu2025}
	Hu, Y., Shi, X., Xu, Z. Optimal Consumption–Investment with Constraints in a Regime Switching Market with Random Coefficients. \emph{Appl. Math. Optim.}, {\bf 2025}, {\em  91}, 1--44.
	
	\bibitem[Hu(2020)]{Hu2020}
	Hu, Y., Liang, G., Tang, S. Systems of ergodic BSDEs arising in regime switching forward performance processes. \emph{SIAM J. Control Optim.}, {\bf 2020}, {\em  58}, 2503--2534.
		
	\bibitem[Kazamaki(2006)]{Kazamaki2006}
	Kazamaki, N. Continuous exponential martingales and BMO. \emph{Springer}, {\bf 2006}.
	
\end{thebibliography}
\end{document}